\DeclarePairedDelimiter{\ceil}{\lceil}{\rceil}
\newtheorem{thm}{Theorem}[section]
\newtheorem{cor}[thm]{Corollary}
\newtheorem{lem}[thm]{Lemma}
\newtheorem{prop}[thm]{Proposition}
\newtheorem{obs}[thm]{Observation}
\title{Critical Bridge Spheres for Links with Arbitrarily Many Bridges}
\author{Puttipong Pongtanapaisan and Daniel Rodman}
\date{January 2019}
\begin{document}

\maketitle

\textbf{Abstract:} We show that for every integer \(b\geq 3\), there exists a link in a \(b\)-bridge position with respect to a critical bridge sphere.
In fact, for each \(b\), we construct an infinite family of links which we call \textit{square} whose bridge spheres are critical.

\section{Introduction}

For \(n\in\mathbb{N}\cup\{0\}\), a topological space is called \textit{\(n\)-connected} if the homotopy groups of dimension \(0,1,2,\dots,n\) are all trivial.
We call every topological space \((-2)\)-connected, and we call every nonempty topological space \((-1)\)-connected.

In \cite{bachman2010topological}, Bachman defined \textit{topologically minimal surfaces} as a topological analogue of geometrically minimal surfaces.
If \(\Sigma\) is a splitting surface in a compact, connected, orientable 3-manifold, the \textit{disk complex} \(\Gamma(\Sigma)\) is defined to be the simplicial complex whose vertices are isotopy classes of compressing disks for \(\Sigma\), and whose \(k\)-simplices are sets of \(k+1\) vertices with pairwise disjoint representatives.
Then \(\Sigma\) is defined to be \textit{topologically minimal of index \(n\)} if \(\Gamma(\Sigma)\) is \((n-2)\)-connected but not \((n-1)\)-connected. 
Thus by definition, the familiar categories of incompressible surfaces and strongly irreducible surfaces are equivalent to topologically minimal surfaces of index \(0\) and of index \(1\), respectively.
Topologically minimal surfaces can be characterized as those surfaces whose disk complex is empty or whose disk complex has at least one nontrivial homotopy group.
In \cite{bachman2010topological}, Bachman showed that topologically minimal surfaces of index \(2\) are precisely \textit{critical surfaces}, which were previously defined and utilized in \cite{bachman2008connected} to prove Gordon's conjecture.

Recall that a surface $F$ is said to be \textit{critical} if isotopy classes of compressing disks can be partitioned into two subsets $\mathcal{C}_0$ and $\mathcal{C}_1$ in such a way that: (1) There exists a pair of compressing disks $D_i,E_i \in \mathcal{C}_i$ on opposite sides of $F$ with the property that $D_i \cap E_i = \emptyset$, where $i \in \lbrace 0,1\rbrace$. (2) If $D \in \mathcal{C}_i$ and $E \in \mathcal{C}_{1-i}$ lie on opposite sides of $F$, then $D \cap E \neq \emptyset.$ For ease of visualization, we will refer to a
compressing disk $C$ as \textit{red} if $C \in \mathcal{C}_0$ and \textit{blue} if $C \in \mathcal{C}_1$.

Topologically minimal surfaces generalize important properties of incompressible surfaces despite being compressible (for index at least 1).
For instance, generalizing a well-known result of Haken \cite{haken1961theorie} that any incompressible surface can be isotoped to a normal surface, Bachman showed that a topologically minimal surface can be isotoped into a particular normal form with respect to a fixed triangulation \cite{bachman2012normalizing,bachman2012normalizing2,bachman2013normalizing3}.

Topological indices have been computed for various Heegaard surfaces of 3-manifolds \cite{bachman2010existence,campisi2018hyperbolic,lee2015topologically}, but less is known about the topogical indices of bridge surfaces. Lee has shown that the $(n + 1)$-bridge sphere for the unknot is a topologically minimal surface of index at most $n$ \cite{lee2016bridge}. This implies that the unknot in 3-bridge position has a corresponding bridge sphere whose index is at most 2, and in fact it is critical.
In \cite{rodman2018infinite}, we constructed a family of 4-bridge links and showed that their corresponding bridge spheres are critical, demonstrating the existence of critical bridge spheres for nontrivial multicomponent links.
This paper generalizes that result, showing that for every \(b\geq 3\), there is an infinite family of \(b\)-bridge links whose corresponding bridge spheres are critical.
In particular, we obtain the first known examples of critical bridge spheres for nontrivial knots.

This paper should be understood as a companion to \cite{rodman2018infinite}, of which this is a generalization.
Many of the steps we take here are straightforward generalizations of steps taken there, and for the sake of brevity, we have here omitted several of the details and proofs that are essentially identical to those found in \cite{rodman2018infinite}.

\begin{figure}[h!]
    \centering
    \includegraphics[width=.5\textwidth]{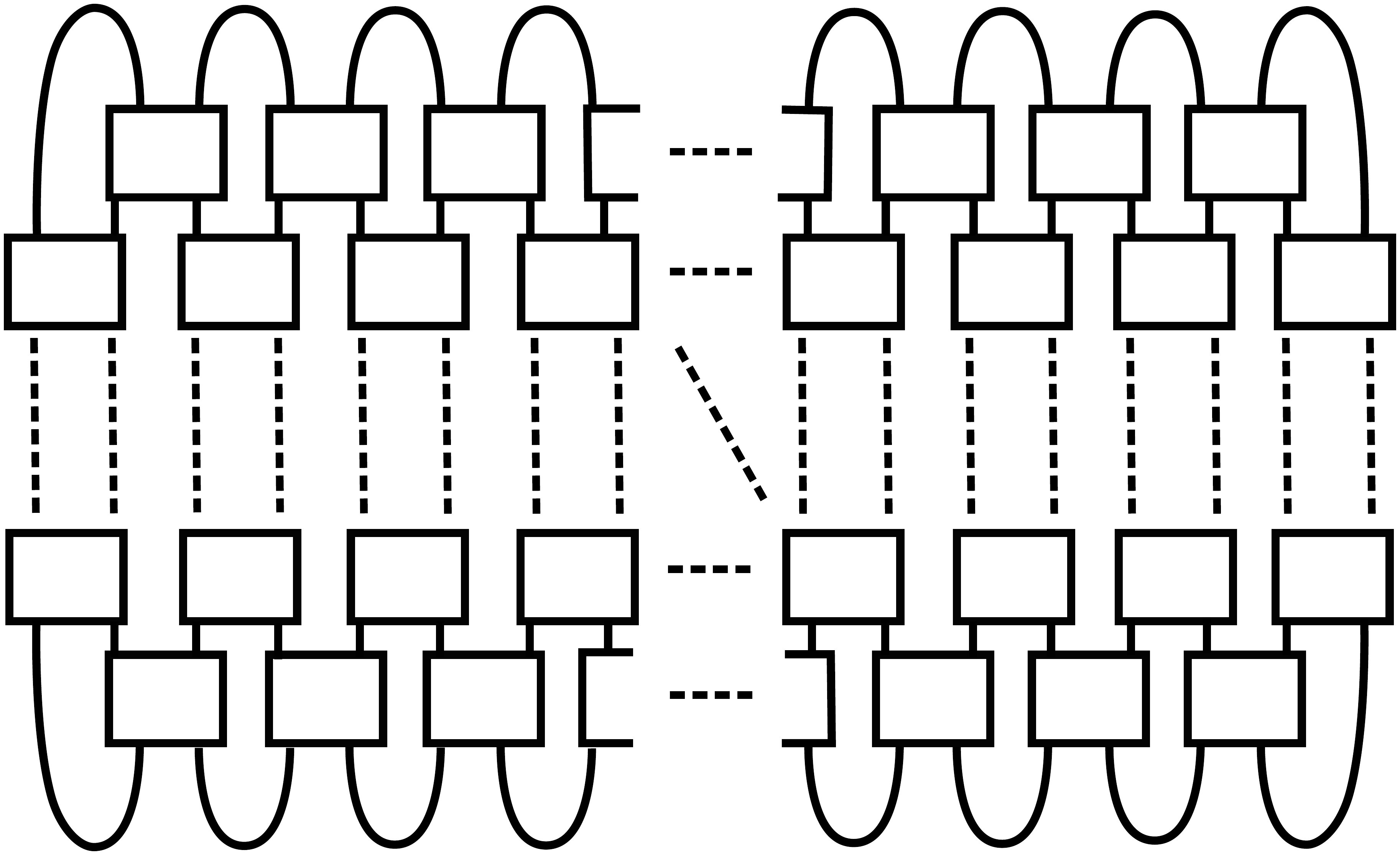}
    \caption{A link in a plat position. Each box represents a twist region containing some number of half twists.}
    \label{fig:plat_link_general}
\end{figure}

\begin{figure}[h!]
    \centering
    \includegraphics[width=.25\textwidth]{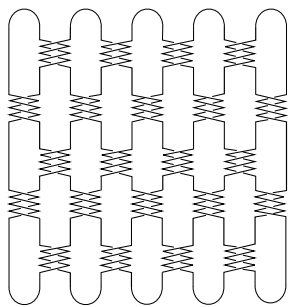}
    \caption{An example of a square  plat position for a link.
    This link is in \({(h,b)}\)-plat position, where \(h=6\) is the height (one more than the number of rows of twist regions), and \(b=5\) is the number of bridges.
    The position is \textit{square} since \(h=2b-4\).
    This example is also \(2\)-twisted because each twist region contains at least \(2\) half twists.}
    \label{fig:plat_link_example}
\end{figure}

Every link \(L\) has a plat position, which is an embedding into \(S^3\) such that \(L\) is a union of vertical strands, twist regions, and bridges, arranged as in Figure \ref{fig:plat_link_general}.
We will call such an embedding a \textit{plat link}. 
The number \(b\) of bridges may be any integer greater than zero.
The number of rows of twist regions may be any integer greater than zero as well, and we define the \textit{height} of a plat link to be one more than the number of rows of twist regions.
A plat link with \(b\) bridges and height \(h\) is called an \textit{\({(h,b)}\)-plat link}.
For example, the plat link in Figure \ref{fig:plat_link_example} is a \({(6,5)}\)-plat link.
Plat links are more carefully defined in \cite{johnson2016bridge}.

Each twist region in a plat link may contain any integral number of half twists, all twisting in the same direction.
A plat link is called \textit{\(n\)-twisted} if every twist region contains at least \(n\) half twists (in either direction).

The \textit{bridge distance} of a splitting surface \(\Sigma\) is the path length of a shortest path in \(\Sigma\)'s curve complex whose endpoints are represented by two disjoint loops which are boundaries of compressing disks on opposite sides of \(\Sigma\).
In Theorem 1.2 from \cite{johnson2016bridge}, Johnson and Moriah show that the bridge distance of the bridge sphere corresponding to an \({(h,b)}\)-plat link is equal to \(\ceil{h/(2b-4)}\).
Thus for a fixed number of bridges, the greater the value of \(h\) (i.e., the ``taller" the plat link), the greater the bridge distance, and similarly, for a fixed height \(h\), the quantity \(b\) can be increased to decrease the bridge distance.
Informally, ``tall" plat links have high bridge distance, and sufficiently ``wide" links have bridge distance one.

It can be shown that every high distance bridge surface has topological index \(1\). Thus, to obtain bridge surfaces with slightly higher topological index (i.e., index \(2\)), we consider links that are found on the boundary of ``wide" and ``tall," and we call them ``square."
We define a \textit{square} plat link to be an \({(h,b)}\)-plat link with \(h=2b-4\).
Notice that in this case, according to Johnson and Moriah's theorem, this implies that the bridge distance is \(1\).
These links are just wide enough to avoid being strongly irreducible (and thus index \(\leq 1\)), but still tall enough to have most pairs of compressing disks on opposite sides of the bridge sphere intersect each other. This paper shows that under a certain assumption on the twist regions, the canonical bridge sphere of a square plat link has topological index two. This result can be viewed as a step towards a way to determine the topological index from a plat projection completely in terms of the height and the number of bridges.

\section{Setting}\label{sec:setting} 
Johnson and Moriah carefully define plat links in \cite{johnson2016bridge}.
They identify \(\mathbb{R}^3\) with an open ball in \(S^3\) and embed their plat link inside that ball, giving them the convenience of working within a Cartesian coordinate system.
This allows them to use concepts like ``left," ``right," and ``straight."

\begin{figure}
     \centering
\labellist \small\hair 2pt
\pinlabel {Level \(1\)} [l] at -111 50
\pinlabel {Level \(h-2\)} [l] at -111 128
\pinlabel {Level \(h-1\)} [l] at -111 215
\pinlabel {Level \(h\)} [l] at -111 305
\pinlabel {\(B\)} at 25 343
% \pinlabel {\(B'\)} at 424 3
\pinlabel {\(\alpha^1\)} at 87 344
\pinlabel {\(\alpha^2\)} at 183 344
\pinlabel {\(\alpha^3\)} at 279 344
\pinlabel {\(\alpha^4\)} at 375 344
\pinlabel {\textcolor{white}{\(D^1\)}} at 87 322
\pinlabel {\textcolor{white}{\(D^2\)}} at 183 322
\pinlabel {\textcolor{white}{\(D^3\)}} at 279 322
\pinlabel {\textcolor{white}{\(D^4\)}} at 375 322
\pinlabel {\(\beta^1\)} at 38 289
\pinlabel {\(\beta^2\)} at 185 298
\pinlabel {\(\beta^3\)} at 282 298
\pinlabel {\(\beta^4\)} at 379 298
\pinlabel {\(\gamma^1\)} at 136 298
\pinlabel {\(\gamma^2\)} at 233 298
\pinlabel {\(\gamma^3\)} at 330 298
\pinlabel {\(\gamma^4\)} at 423 298
\endlabellist
    \includegraphics[width=.75\textwidth]{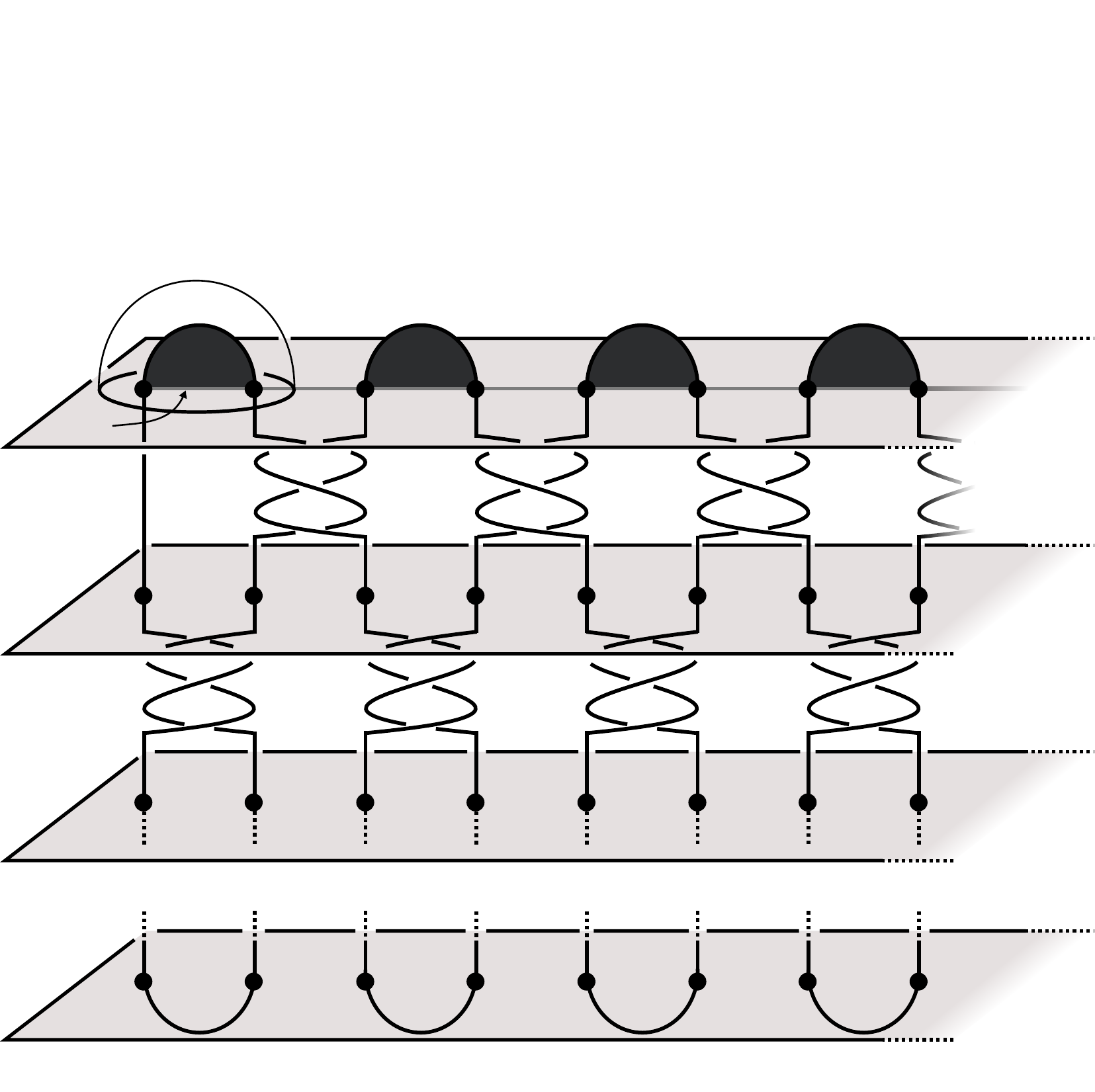}
    \caption{The bridge sphere \(F\) is depicted at various levels.
    The dark disks are the bridge disks above $F$.
    The compressing disk \(B\) is pictured at the top of the figure.}
    \label{fig:190417_PlatLink_Details}
\end{figure}

We will follow Johnson and Moriah's example here. 
Let \(L\) be a \(2\)-twisted square plat link (so \(L\) is in \({(h,b)}\)-plat position with \(h=2b-4\)).
Further, assume for the rest of the paper that the twist regions all twist positively in the bottom row of twist regions, negatively along the next row up, and so on, alternating from positive to negative as we move up from row to row.
Let $S$ be the canonical bridge sphere for \(L\).
\(S\) decomposes $S^3$ into two 3-balls $M_+$ and $M_-$.
Define \(F = S \backslash L \), which can be locally visualized as a punctured horizontal plane, and it is convenient to consider \(F\) embedded at different levels of \(L\). 
Specifically, \(F\) may be embedded just below the \(n\)th row of twist regions, which we call \textit{level \(n\)}.
When embedded just below the top row of bridges, we say \(F\) is at \textit{level \(h\)}.
See Figure \ref{fig:190417_PlatLink_Details}.

Consider \(F\) embedded at level \(h\).
We will name the upper bridge arcs \(\alpha^1,\alpha^2,\dots,\alpha^b\), from left to right, respectively.
The upper bridge arcs intersect \(F\) in  \(2b\) punctures, all in a straight line.
Let \(\Gamma\) be the straight line segment containing all these punctures.
\(\Gamma\) is cut into \(2b-1\) subarcs by the punctures of \(L\cap F\). 
For \(1\leq i\leq b\), we define \(\beta^i\) to be  the subarc of \(\Gamma\) whose endpoints are the endpoints of the bridge \(\alpha^i\).
For \(1\leq i\leq b-1\), we define \(\gamma^i\) to be the arc of \(\Gamma\) which shares its left endpoint with \(\alpha^i\) and its right endpoint with \(\alpha^{i+1}\).
Thus \[\Gamma=\beta^1\cup\gamma^1\cup\beta^2\cup\gamma^2\cup\cdots\cup\gamma^{i-1}\cup\beta^b.\]
For each \(i\), the pair of arcs \(\alpha^i\), \(\beta^i\) bounds a flat disk \(D^i\) pictured in Figure \ref{fig:190417_PlatLink_Details}.
Collectively, we will refer to the disks \(D^i\) as the \textit{(upper) bridge disks}.

We will also label some of the corresponding elements below the bridge sphere.
Consider \(F\) embedded at level 1.
We will name the rightmost (resp. leftmost) lower bridge arc \(\alpha'\) (resp. \(\alpha''\)), and we will define \(\beta'\) (resp. \(\beta''\)) to be the straight line segment in \(F\) between the endpoints of \(\alpha'\) (resp. \(\alpha''\)). 
Observe that together, \(\alpha'\) and \(\beta'\) bound a lower bridge disk which we call \(D'\). Similarly, \(\alpha''\) and \(\beta''\) cobound a bridge disk \(D''\).

For the rest of the paper, \(B\) will denote the compressing disk obtained from taking the frontier of a regular neighborhood of \(D^1\) in $M_+$ (see Figure \ref{fig:190417_PlatLink_Details}).
Similarly, \(B'\) will denote the compressing disk obtained from taking the frontier of a regular neighborhood of \(D'\) in $M_-$.
We are going to partition the set of compressing disks as follows.
The set of blue compressing disks will consist of exactly two disks: \(B\) and \(B'\), and all other compressing disks for \(F\) will be red. We will show that our choice for the partition satisfies the conditions in the definition of a critical surface.

\section{The labyrinth}

\begin{figure}
    \centering
\labellist \small\hair 2pt
\pinlabel {
    \setlength{\fboxsep}{2pt}
	\fbox{\(N^1\)}
	} at 165 110
\pinlabel {
    \setlength{\fboxsep}{2pt}
	\fbox{\(N^2\)}
    } at 209.5 110
\pinlabel {
    \setlength{\fboxsep}{2pt}
	\fbox{\(N^3\)}
    } at 254 110
\pinlabel {
    \setlength{\fboxsep}{2pt}
	\fbox{\(N^{h-3}\)}
    } at 373 110
\pinlabel {
    \setlength{\fboxsep}{2pt}
	\fbox{\(N^{h-2}\)}
    } at 418 110
\pinlabel {
    \setlength{\fboxsep}{2pt}
	\fbox{\(N^{h-1}\)}
    } at 463 110
\pinlabel {
    \setlength{\fboxsep}{2pt}
	\fbox{\(N^h\)}
    } at 508 110
\pinlabel {\(G^2\)} at 233 122
\pinlabel {\(G^{h-4}\)} at 353 122
\pinlabel {\(G^{h-2}\)} at 450 122
\pinlabel {\(G^1\)} at 188 20
\pinlabel {\(G^3\)} at 276 20
\pinlabel {\(G^{h-3}\)} at 397 20
\pinlabel {\(G^{h-1}\)} at 489 20
\pinlabel {\(\beta^1\)} at 30 73
\pinlabel {\(\beta^2\)} at 121 73
\pinlabel {\(\beta^3\)} at 208 73
\pinlabel {\(\beta^{b-1}\)} at 418 73
\pinlabel {\(\beta^{b}\)} at 511 73
\pinlabel {Lab} at 129 9
\endlabellist
\includegraphics[width=1\textwidth]{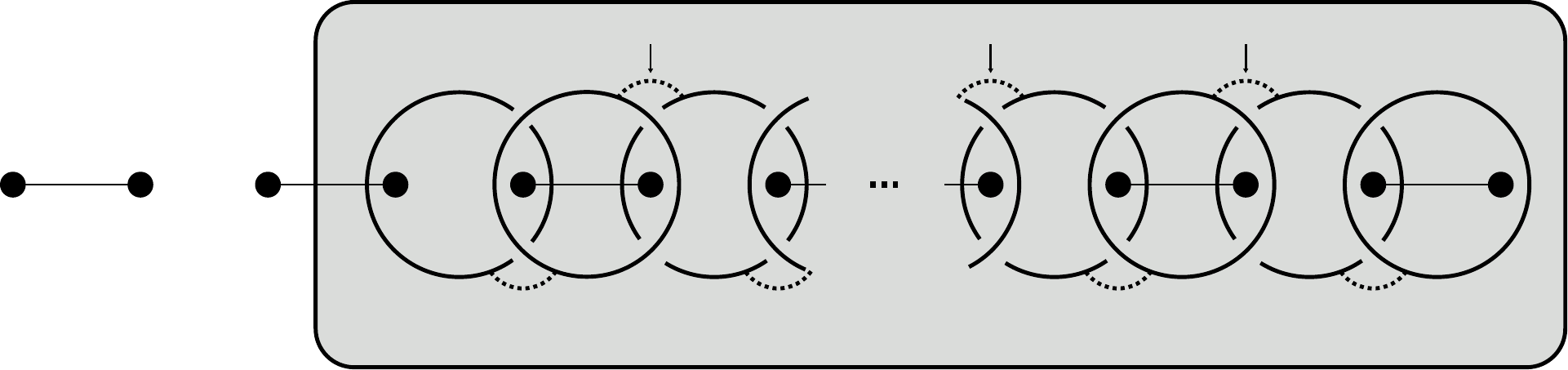}
\caption{An unlink diagram representing the image of \(\partial B'\) in \(F\) at level \(h\).
For each \(j\), \(N^j\) is the number of parallel circles represented by the corresponding circle in the diagram.}
\label{fig:190131GeneralUnlinkDiagram}
\end{figure}

\begin{figure}[h!]
\centering
\labellist \small\hair 2pt
\pinlabel {
	\setlength{\fboxsep}{2pt}
	\fbox{\(N\)}
	} at 20 56
\pinlabel {
	\setlength{\fboxsep}{2pt}
	\fbox{\(n\)}
	} at 20 30.5
\pinlabel {
	\setlength{\fboxsep}{2pt}
	\fbox{\(N\)}
	} at 20 134
\pinlabel {
	\setlength{\fboxsep}{2pt}
	\fbox{\(n\)}
	} at 20 160
\pinlabel {=} at 101 44
\pinlabel {=} at 101 145
\pinlabel {=} at 213 44
\pinlabel {=} at 213 145
%SWtop
\pinlabel {
\begin{rotate}{45}
\(
\left\{
\begin{tabular}{c}
~ \\ 
~ \\ 
\end{tabular}
\right.
\)
\end{rotate}
} at 122 105
\pinlabel {\(N\)} at 114 103
%NEtop
\pinlabel {
\begin{rotate}{225}
\(
\left\{
\begin{tabular}{c}
~ \\ 
~ \\ 
\end{tabular}
\right.
\)
\end{rotate}
} at 194 183
\pinlabel {\(N\)} at 200 185
%SEbottom
\pinlabel {
\begin{rotate}{135}
\(
\left\{
\begin{tabular}{c}
~ \\ 
~ \\ 
\end{tabular}
\right.
\)
\end{rotate}
} at 197 9
\pinlabel {\(N\)} at 200 5
%NWbottom
\pinlabel {
\begin{rotate}{-45}
\(
\left\{
\begin{tabular}{c}
~ \\ 
~ \\ 
\end{tabular}
\right.
\)
\end{rotate}
} at 121 80
\pinlabel {\(N\)} at 118 85
%NWtop
\pinlabel {
\begin{rotate}{-45}
\(
\left\{
\begin{tabular}{c}
~ \\ 
\end{tabular}
\right.
\)
\end{rotate}
} at 120 181
\pinlabel {\(n\)} at 113 187
%SEtop
\pinlabel {
\begin{rotate}{135}
\(
\left\{
\begin{tabular}{c}
~ \\ 
\end{tabular}
\right.
\)
\end{rotate}
} at 196 110
\pinlabel {\(n\)} at 197 104
%NEbottom
\pinlabel {
\begin{rotate}{225}
\(
\left\{
\begin{tabular}{c}
~ \\ 
\end{tabular}
\right.
\)
\end{rotate}
} at 195 82
\pinlabel {\(n\)} at 199 82
%SWbottom
\pinlabel {
\begin{rotate}{45}
\(
\left\{
\begin{tabular}{c}
~ \\ 
\end{tabular}
\right.
\)
\end{rotate}
} at 123 7
\pinlabel {\(n\)} at 118 5
\endlabellist
\includegraphics[scale=.9]{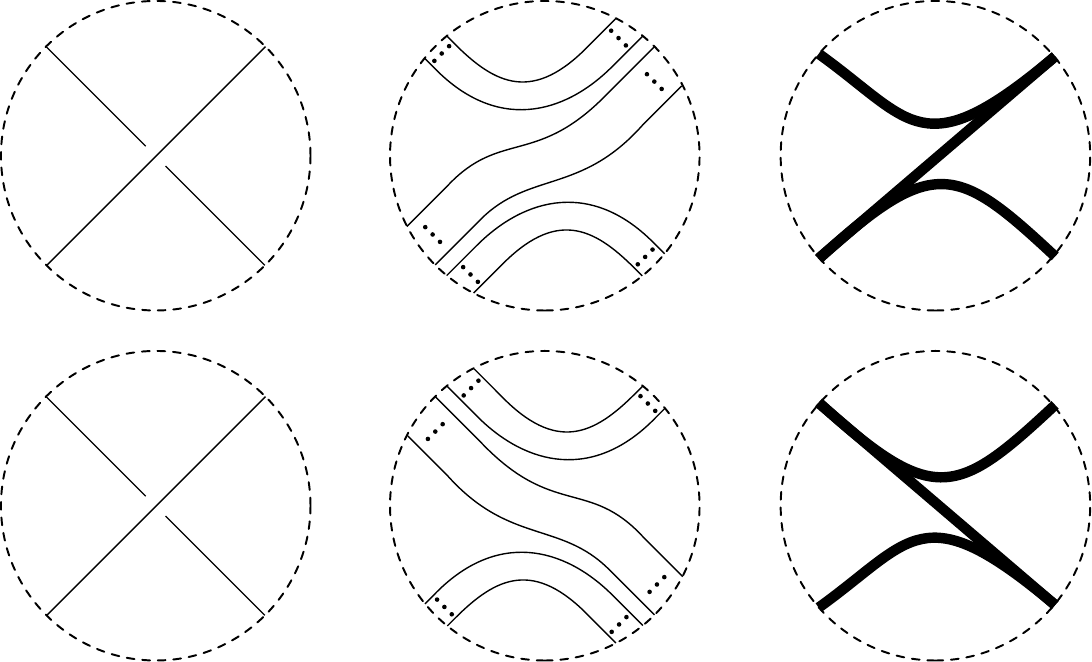}
\caption{
Here we depict how to interpret a crossing in the link diagram of \(N\) strands with \(n\) strands, where \(N>n\).
In the left pictures, we see the crossing in the unlink diagram.
In the middle pictures, we see the actual arcs that the crossing represents.
In the right pictures, we show how this unlink diagram relates to train tracks: each crossing in our unlink diagram is equivalent to two switches in a train track diagram.
}
\label{fig:190131multi_strand_crossing_general}
\end{figure}

Under the isotopy of \(F\) taking \(F\) from level \(1\) to level \(h\), the loop \(\partial B'\) becomes incredibly convoluted.
In \cite{rodman2018infinite}, the author describes a convenient way to represent the image of \(\partial B'\) under this isotopy using the unlink diagram in Figure \ref{fig:190131GeneralUnlinkDiagram}.
To obtain Figure \ref{fig:190131GeneralUnlinkDiagram}, we use the fact that the signs of the rows of twist regions alternate, but we refer readers to \cite{rodman2018infinite} for more detail.
In this unlink diagram, each circle represents a number \(N^j\) of parallel copies of that circle.
(In figures, we will use boxed numbers \fbox{\(N\)} when denoting the number of parallel strands represented.)
Each crossing in the unlink diagram represents the configuration of arcs in Figure \ref{fig:190131multi_strand_crossing_general}.

Let \(G^1,G^2,\dots,G^{h-1}\) denote the dotted arcs in Figure \ref{fig:190131GeneralUnlinkDiagram}.
(For each \(i\), \(G^i\cap\partial B'=\partial G^i\).)
We will refer to these arcs as the \textit{gates}, and we will consider \(G^i\) to be ``colored" with color \(i\).
Notice that none of the gates is parallel to \(\partial B'\).

\begin{lem}
\(N^1>N^2\), and \(N^j>N^{j\pm 1}\) for odd \(j\geq 3\).
\end{lem}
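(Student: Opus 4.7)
The plan is to reconstruct the $N^j$'s explicitly from the half-twist counts by tracing $\partial B'$ under the isotopy of $F$ from level $1$ up to level $h$, one row of twist regions at a time. At the starting level, $\partial B'$ is a small loop surrounding $D'$ near the right edge of $F$, so the configuration of strand counts is trivial. Each time $F$ is raised through a row of twist regions containing $|t| \geq 2$ half twists, the local crossing pattern is replaced with one whose strand counts obey a linear recursion whose coefficient is essentially $|t|$, combining adjacent columns of parallel strands in a predictable way governed by Figure \ref{fig:190131multi_strand_crossing_general}.

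Iterating this recursion $h - 1$ times produces exactly the configuration in Figure \ref{fig:190131GeneralUnlinkDiagram} and yields explicit formulas for the $N^j$'s as positive integer combinations of the twist counts. Because the rows alternate in sign but the $2$-twisted hypothesis guarantees $|t| \geq 2$ for every row, each step of the recursion strictly enlarges certain columns relative to their neighbors. The inequalities then follow by induction on $j$: odd indices $j \geq 3$ correspond to columns in which $\partial B'$ has just exited a twist region and absorbed parallel copies from both sides, so they strictly dominate the adjacent $N^{j\pm 1}$. The boundary case $N^1 > N^2$ is handled separately by checking the left edge of the plat, where the asymmetric twist structure still produces the required strict growth because of $|t| \geq 2$.

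The main obstacle will be the combinatorial bookkeeping: correctly identifying which columns merge into which at each level, and confirming the indices match those labeled in Figure \ref{fig:190131GeneralUnlinkDiagram}, particularly near the two edges where the pattern is not symmetric. Once that bookkeeping is in place, the strict inequalities are an immediate consequence of $|t| \geq 2$ at each step, and no further analysis is needed. I expect the argument to mirror closely the analogous lemma in \cite{rodman2018infinite}, with the only new feature being the extra columns produced when $b > 4$.
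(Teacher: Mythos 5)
Your core observation is the right one: the odd-indexed circles are exactly the ones that pass through the top row of twist regions, and in doing so they absorb $t(x+z)$ new parallel copies where $t \geq 2$, so they must strictly dominate their neighbors. But you have considerably overestimated the amount of machinery required. The paper does \emph{not} reconstruct explicit formulas for the $N^j$'s, does \emph{not} iterate a recursion through all $h-1$ levels, and does \emph{not} argue by induction on $j$. The entire proof is a single local argument about the \emph{last} step of the isotopy, from level $h-1$ to level $h$: at level $h-1$ the relevant picture is three circles of multiplicities $x, y, z$ (with $x \geq 0$ and $y, z \geq 1$), and raising $F$ one more level adds a circle of multiplicity $t(x+z)$ around the middle one, giving $N^j = y + t(x+z)$ while $N^{j\pm 1} = x, z$ remain unchanged. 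Since $t \geq 2$ and $z \geq 1$, the strict inequality $N^j > N^{j\pm 1}$ is immediate for every odd $j$ simultaneously, with the boundary case $j=1$ handled by taking $x = 0$ in the same formula. So there is no bookkeeping to carry through the earlier rows, and no separate inductive step; the ``analogous lemma'' you reference from the $b=4$ case already has this one-step form and generalizes verbatim. If you want to pursue your version, the explicit-formula approach would of course also work, but you should drop the phrase ``induction on $j$'' — each odd $j$ is handled by the identical local picture, not by appealing to smaller $j$ — and you should notice that the unlink diagram in Figure~\ref{fig:190131GeneralUnlinkDiagram} is already given, so re-deriving it is redundant for this lemma.
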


\begin{proof}
Notice that in the unlink diagram in Figure \ref{fig:190131multi_strand_crossing_general}, every circle component has either all under-crossings or all over-crossings.
We will thus refer to the circles as \textit{undercircles} or \textit{overcircles}. 
With this language, this lemma can be stated, \textit{Any particular undercircle represents strictly more parallel circles than its adjacent overcircle(s)}.

\begin{figure}
\centering
\labellist \small\hair 2pt
\pinlabel {Level \(h-1\)} at 197 38
\pinlabel {Level \(h\)} at 248 38

\pinlabel {\fbox{\(x\)}} at 46 78
\pinlabel {\fbox{\(y\)}} at 90.5 78
\pinlabel {\fbox{\(z\)}} at 135 78
\pinlabel {\fbox{\(x\)}} at 316 78
\pinlabel {\fbox{\(y\)}} at 360.5 78
\pinlabel {\fbox{\(t(x+z)\)}} at 384 96
\pinlabel {\fbox{\(z\)}} at 405 78
\pinlabel {\(\beta^i\)} at 46 43
\pinlabel {\(\beta^{i+1}\)} at 135 43
\pinlabel {\(\beta^i\)} at 316 43
\pinlabel {\(\beta^{i+1}\)} at 405 43
\endlabellist
\includegraphics[width=.9\textwidth]{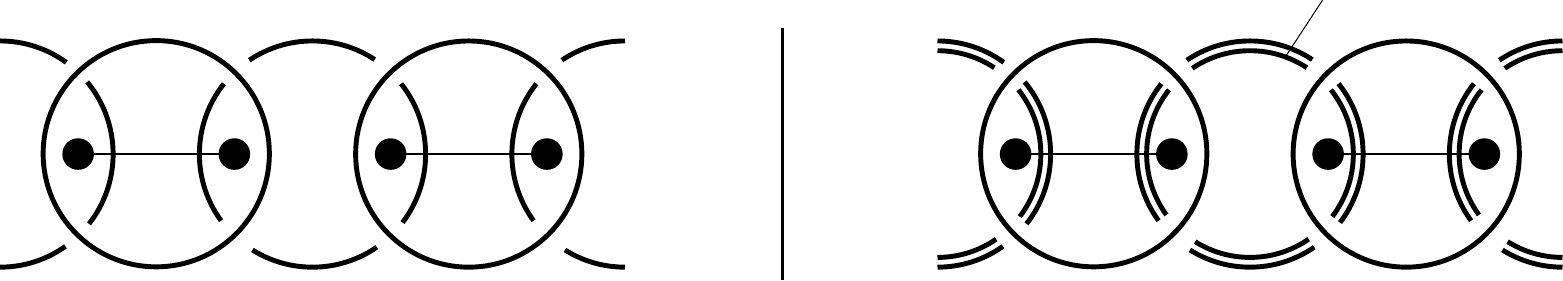}
\caption{On the left, we see the loop diagram for \(\partial B'\) at level \(h-1\), where the three full loops represent \(x\), \(y\), and \(z\) strands, respectively from left to right.
In the figure on the right, the unlink diagram has been pushed up to level \(h\), an isotopy during which the second and third punctures twist through a twist region, resulting in the addition of \(t(x+z)\) new loops (where \(t\) is the number of half twists in the twist region).
Thus \(N^{j-1}=x\), \(N^j=y+t(x+z)\), and \(N^{j+1}=z\), and so \(N^j>N^{j\pm 1}\).
}
\label{fig:190117OverAndUnderCircles2}
\end{figure}

Consider the loop \(\partial B'\) embedded in \(F\) at level 1.
As we ``push \(F\) up" to level \(h\), \(\partial B'\) is forced to become considerably twisted as it spirals through the twist regions of \(L\).
To create the unlink diagram, we add a number of overcircles or undercircles to the unlink diagram for each twist region that \(F\) passes through.
(This process is thoroughly explained and illustrated in \cite{rodman2018infinite}.)
Refer to Figure \ref{fig:190117OverAndUnderCircles2}, which depicts, locally, the situation when we push \(F\) up from level \(h-1\) to level \(h\).
The left half of the figure depicts the link diagram at level \(h-1\), and the right half of the figure depicts the link diagram at level \(h\).
On each side of the figure, the first two punctures and the last two punctures lie below bridge arcs.
On the left, the figure depicts an undercircle and the two overcircles on either side of it.
The circles represent \(x\), \(y\), and \(z\) parallel circles, from left to right (where \(x\geq 0\) and \(y,z\geq 1\)).
When we push \(F\) up to level \(h\), the second and third punctures in the figure pass through a twist region, and so a new circle is added to the diagram around them.
To determine how many parallel strands this new circle represents, the rule is that the number of parallel strands of \(\partial B'\) which intersect the newly added circle at level \(h-1\) (here, \((x+z)\) strands) is multiplied by the absolute value of the number \(t\) of half twists in the twist region (in the top row, the twist numbers are all positive, so \(|t|=t\)).
Now there are two parallel circles pictured, the original one representing a total of \(y\) parallel circles, and the newly added one representing \(t(x+z)\) parallel circles.
All of those circles are parallel, and so in this picture, after isotopy of \(\partial B'\) to level \(h\), the total number of parallel circles represented by \(N^j\) will be \(N^j=t(x+z)+y\).
Since \(j\) is odd, \(N^j\) may be \(N^1\), in which case \(x=0\), but in any case, \(z\) is nonzero, and \(t\geq 2\) since \(L\) is 2-twisted.
Therefore whether or not \(x=0\), we have by basic algebra that \(t(x+z)+y>x\) and \(t(x+z)+y>z\), and so we conclude that \(N^j>N^{j\pm 1}\).
\end{proof}

\begin{cor}\label{cor:lab_gate_locations}
In the unlink diagram representing \(\partial B'\) (Figure \ref{fig:190131GeneralUnlinkDiagram}), every crossing represents the bottom picture of Figure \ref{fig:190131multi_strand_crossing_general} rather than the top picture.
\end{cor}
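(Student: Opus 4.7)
The plan is to observe that this corollary is essentially an immediate consequence of the preceding lemma once one unpacks the pictorial conventions in Figure \ref{fig:190131multi_strand_crossing_general}. That figure depicts two possible realizations of a crossing between a bundle of $N$ parallel strands and a bundle of $n$ parallel strands, and the two pictures are distinguished by which of the two multiplicities — the larger $N$ or the smaller $n$ — is carried by the \emph{undercrossing} strands in the actual arc configuration. So to prove the corollary, it suffices to check that at every crossing in the unlink diagram of Figure \ref{fig:190131GeneralUnlinkDiagram}, the undercircle carries strictly more parallel strands than the overcircle.

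First I would recall that, as noted in the proof of the preceding lemma, each circle in Figure \ref{fig:190131GeneralUnlinkDiagram} is either an undercircle or an overcircle, and the construction forces the undercircles and overcircles to alternate along the diagram. Reading off the indices from the figure, the undercircles are precisely the $N^j$ with $j=1$ or $j$ odd and $\geq 3$, and each such undercircle is adjacent at a crossing only to overcircles $N^{j\pm 1}$ with even index. Thus every crossing of the unlink diagram is a crossing between an odd-indexed undercircle $N^j$ and an adjacent even-indexed overcircle $N^{j\pm 1}$.

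Finally I would apply the preceding lemma directly: it asserts exactly that $N^1 > N^2$ and $N^j > N^{j \pm 1}$ for every odd $j\geq 3$. In other words, at every crossing the undercircle multiplicity strictly exceeds the overcircle multiplicity. By the convention of Figure \ref{fig:190131multi_strand_crossing_general}, this is precisely the bottom picture rather than the top.

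The only real obstacle I foresee is bookkeeping: making sure I identify unambiguously which circles in Figure \ref{fig:190131GeneralUnlinkDiagram} are the undercircles and which are the overcircles, and matching the roles of the symbols $N$ and $n$ in Figure \ref{fig:190131multi_strand_crossing_general} to the roles of the larger and smaller of $N^j, N^{j\pm 1}$. Once this matching is pinned down, the proof is a one-line invocation of the lemma.
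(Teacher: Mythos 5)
Your proposal is correct and matches the paper's intent: the corollary is stated without an explicit proof precisely because it is an immediate rephrasing of the preceding lemma in the pictorial language of Figure \ref{fig:190131multi_strand_crossing_general}, which is exactly what you observe (undercircles carry strictly more strands than adjacent overcircles, so every crossing is the ``larger bundle underneath'' picture). Your bookkeeping about odd-indexed undercircles versus even-indexed overcircles is the right way to pin this down.
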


\begin{prop}\label{prop:min_pos}
The position of \(\partial B'\) described by the link diagram in Figure \ref{fig:190131GeneralUnlinkDiagram} is minimal with respect to \(\beta^2\cup\beta^3\cup\cdots\cup\beta^b\). 
\end{prop}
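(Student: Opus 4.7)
My plan is to apply the standard bigon criterion: two transverse one-submanifolds of a surface are in minimal position if and only if they cobound no bigon. The proposition thus reduces to showing that there is no embedded disk $\Delta \subset F$ whose boundary is the concatenation of a subarc of $\partial B'$ and a subarc of some $\beta^i$ (with $2\le i\le b$), meeting transversely at exactly two points.

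To make the bigon search systematic, I would reinterpret the unlink diagram of Figure \ref{fig:190131GeneralUnlinkDiagram} as a train track $\tau$ on $F$, smoothing each crossing according to the right-hand column of Figure \ref{fig:190131multi_strand_crossing_general}. By Corollary \ref{cor:lab_gate_locations}, every crossing realizes the bottom (asymmetric) configuration, so the smoothing is unambiguous and the cusps at the resulting switches lie on the side of the larger bundle. The previous lemma, which guarantees $N^j > N^{j\pm 1}$ at the odd-indexed undercircles, ensures that this cusp placement is consistent across the whole diagram. The curve $\partial B'$ is then carried by $\tau$ with strictly positive integer weights on every branch, so any bigon it cobounds with $\beta^i$ would descend to a cusp-free bigon among the complementary regions of $\tau \cup \beta^i$.

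I would then enumerate these complementary regions for each $i$. Since each $\beta^i$ lies in a neighborhood of $\Gamma$ and meets $\tau$ only through the two strand bundles surrounding its endpoints, every complementary region either contains a puncture of $L\cap F$, has three or more corners, or contains a cusp of $\tau$; in every case, it fails to be a bigon. The layered structure of Figure \ref{fig:190131GeneralUnlinkDiagram} means that only a small number of local patterns need to be checked, and these patterns recur unchanged as $b$ grows.

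The main obstacle is the analysis at the gates, where a bundle of strands wraps around a puncture that is also an endpoint of some $\beta^i$. Without the asymmetry in the crossings, a pinched region between a turning strand and $\beta^i$ could form a bigon; the content of Corollary \ref{cor:lab_gate_locations} is precisely that the cusp introduced by the pair of switches always lies on the $\beta^i$-side of the crossing, breaking any such potential bigon. Once this local verification is complete, the argument runs in parallel with the $4$-bridge case from \cite{rodman2018infinite}, with the additional strand bundles in the general case being handled by identical local reasoning.
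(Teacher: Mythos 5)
Your approach matches the paper's: the published proof simply cites the $b=4$ case from \cite{rodman2018infinite}, which (as Figure \ref{fig:190131multi_strand_crossing_general} indicates) already rests on the train-track interpretation and bigon criterion you describe, with the inequality $N^j>N^{j\pm 1}$ and Corollary \ref{cor:lab_gate_locations} pinning down the cusp direction at each crossing exactly as you use them. You defer the actual enumeration of complementary regions to a local check, but so does the paper; both leave that substance to \cite{rodman2018infinite}, so the strategy and the role of the key ingredients coincide.
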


\begin{proof}
The proof of Proposition \ref{prop:min_pos} is similar to the proof of Proposition 4.1 in \cite{rodman2018infinite}.
The latter proves the statement for the specific case when \(b=4\), and it generalizes directly to arbitrary \(b\). 
\end{proof}

Let \textbf{Lab} be the disk in \(F\) at level \(h\) which contains \(\partial B'\) and whose complement in \(F\) is a regular neighborhood of \(\beta^1\cup\gamma^1\).
This is the gray disk pictured in Figure \ref{fig:190131GeneralUnlinkDiagram}.

\begin{figure}
    \centering
    \includegraphics[width=.2\textwidth]{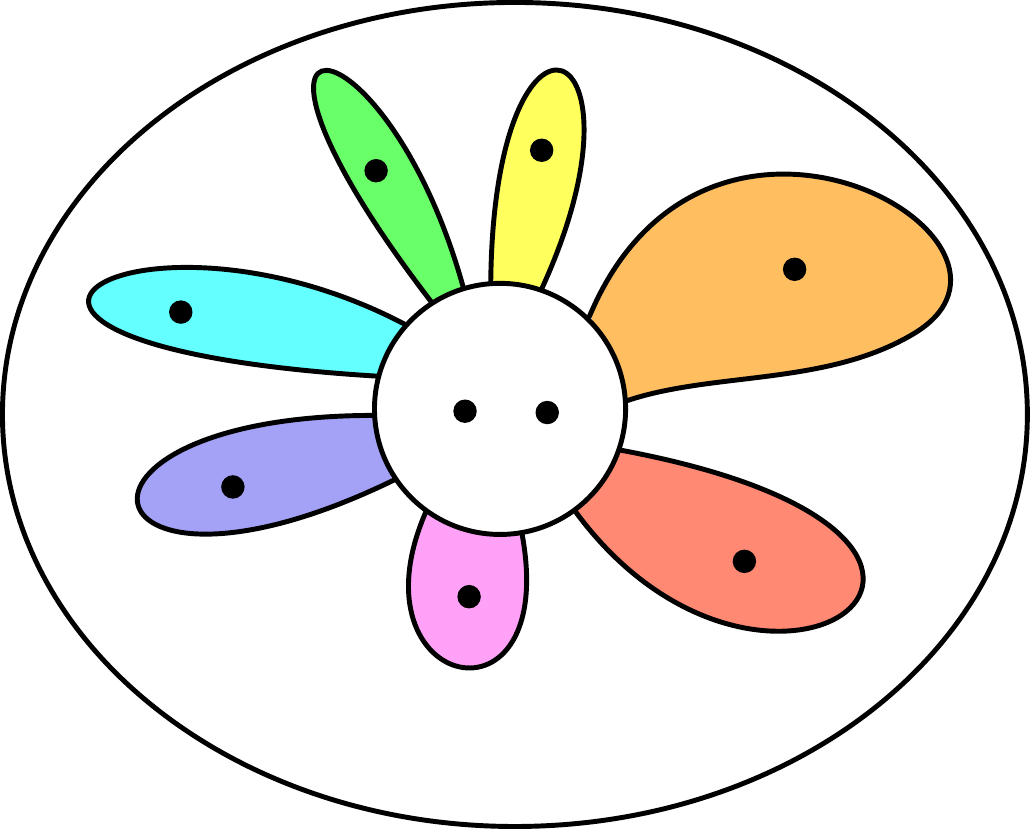}
    \caption{The disk Lab is isotopic to this picture. 
    The dotted lines are the gates, of which there are \(h-1\).
    The center circle is \(\partial B'\).
    The punctured disks cut out by the gates and by \(\partial B'\) are the colored punctured disks.}
    \label{fig:190119Flower_colored}
\end{figure}

\begin{prop}\label{prop:flower}
\(\partial B'\) and the gates cut Lab into \(h+1\) components: \(h-1\) once-punctured disks, an annulus, and a twice-punctured disk.
\end{prop}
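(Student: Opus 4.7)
The plan is to identify each of the $h+1$ components of the dissection of Lab by $\partial B'$ and the gates explicitly, reading the local structure off the unlink diagram of Figure \ref{fig:190131GeneralUnlinkDiagram} together with the flower picture in Figure \ref{fig:190119Flower_colored}. The main tools will be Corollary \ref{cor:lab_gate_locations}, which fixes the type of each crossing in the diagram, and Proposition \ref{prop:min_pos}, which rules out extraneous intersections of $\partial B'$ with the bridge arcs.

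First I would analyze $\partial B'$ alone. Since $F$ is a $2b$-punctured sphere and Lab is obtained from $F$ by removing a regular neighborhood of $\beta^1 \cup \gamma^1$ (which carries three of the punctures), Lab is a disk with $2b-3$ punctures. The curve $\partial B'$ is essential in $F$, and since the isotopy taking $F$ from level $1$ up to level $h$ transports $\partial B'$ and the punctures together, $\partial B'$ is still, as a curve in the punctured surface, isotopic to a small loop enclosing exactly the two punctures that at level $1$ are the endpoints of $\alpha'$. Consulting the innermost pod of Figure \ref{fig:190131GeneralUnlinkDiagram} confirms that these two punctures lie in the interior of Lab. Thus $\partial B'$ separates Lab into a twice-punctured disk on its inside and, on its outside, an annulus cobounded by $\partial B'$ and the boundary of Lab, containing the remaining $2b-5$ punctures.

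Next I would turn to the gates. From Figure \ref{fig:190131GeneralUnlinkDiagram} we read that all $h-1$ gates have both endpoints on $\partial B'$, lie entirely in the outer annulus, are pairwise disjoint, and are arranged around $\partial B'$ in a flower pattern. The crucial claim is that each gate $G^j$, together with the subarc of $\partial B'$ between its two endpoints, bounds a once-punctured disk in the outer annulus whose single puncture corresponds to one of the arcs $\beta^2, \ldots, \beta^b$. Here Corollary \ref{cor:lab_gate_locations} pins down the local structure near each crossing in the diagram, while Proposition \ref{prop:min_pos} forbids any extra intersection of $\partial B'$ with $\bigcup_{j \geq 2} \beta^j$, so that each petal is genuinely a once-punctured disk rather than a more complicated region.

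Finally, removing the $h-1$ petals from the outer annulus leaves a region whose boundary consists of the outer boundary of Lab together with one new closed curve made of alternating subarcs of $\partial B'$ and gate arcs. Since the petals absorb all $2b-5$ punctures of the outer annulus, this leftover is a planar surface with no punctures and exactly two boundary components, hence an annulus. Combining this annulus with the inside twice-punctured disk and the $h-1$ petals yields the $h+1$ components claimed. The main obstacle will be justifying the previous step: showing that each gate bounds exactly one once-punctured disk (neither zero nor multiple punctures) and that no two gates interact in a way that changes the count. This is exactly where Proposition \ref{prop:min_pos} does the heavy lifting; once that is in hand, the identification of the leftover region as an annulus follows from an Euler characteristic count, since $\chi(\text{Lab}) + (h-1) = -1$ is forced to equal the sum of the contributions $0, 0, -1$ from the three types of pieces.
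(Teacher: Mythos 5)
The paper does not actually supply a proof of this proposition: it simply says the argument is ``similar to the proof of Proposition 4.2 in \cite{rodman2018infinite}, \dots\ generalize[d] by replacing every instance of the number 3 with \(b-1\).'' So there is no in-paper argument to compare against line by line. That said, your reconstruction is in the right spirit for what such a proof would look like: read the topology of the decomposition off the unlink diagram of Figure~\ref{fig:190131GeneralUnlinkDiagram}, using Corollary~\ref{cor:lab_gate_locations} to pin down the local crossing picture and Proposition~\ref{prop:min_pos} to certify that the picture has no spurious intersections with \(\beta^2\cup\cdots\cup\beta^b\). The puncture count (\(2b-3\) punctures in Lab, two enclosed by \(\partial B'\), \(2b-5=h-1\) left for the \(h-1\) petals) and the Euler characteristic arithmetic (\(\chi(\mathrm{Lab})+(h-1)=(4-2b)+(2b-5)=-1=(h-1)\cdot 0+0+(-1)\)) both check out.

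The place where you have correctly identified, but not actually closed, a gap is the claim that each gate \(G^i\) together with the subarc of \(\partial B'\) it subtends cuts off a region containing exactly one puncture. You attribute this to Proposition~\ref{prop:min_pos}, but minimality of \(\partial B'\) with respect to \(\beta^2\cup\cdots\cup\beta^b\) is a statement about \(\partial B'\), not about the gates, and by itself it does not rule out a petal with zero or two punctures (equivalently, a gate whose endpoints cut off a ``wrong'' arc of \(\partial B'\)). What actually establishes this is the explicit construction of the gates together with Corollary~\ref{cor:lab_gate_locations}: once every crossing in the unlink diagram is known to be of the ``bottom'' type in Figure~\ref{fig:190131multi_strand_crossing_general}, one can trace each \(G^i\) in Figure~\ref{fig:190131GeneralUnlinkDiagram} and see that it cuts off exactly one puncture from the nested families of parallel circles, with \(\partial B'\) itself bounding the innermost twice-punctured disk. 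Your Euler characteristic computation is then a consistency check rather than a derivation: it confirms the total \(\chi\) works out once each piece is already known to be of the stated type, but it cannot by itself redistribute a missing puncture from one petal to another. If you replace the appeal to Proposition~\ref{prop:min_pos} with a direct inspection of the gates in the (now certified) unlink diagram, the argument is complete.
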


\begin{proof}
The proof is similar to the proof of Proposition 4.2 in \cite{rodman2018infinite}, where the statement is proved for the specific case where \(b=4\), but it generalizes simply by replacing every instance of the number 3 with \(b-1\).
\end{proof}

Consider the once-punctured disks cut out of Lab by \(\partial B'\) and the gates (i.e., the ``petals" in Figure \ref{fig:190119Flower_colored}).
Each one is bounded by one gate and by an arc of \(\partial B'\).
For each color \(i\), we will refer to the once-punctured disk whose boundary contains \(G^i\) as the \textbf{\(i\)-colored disk}, and we will refer to these punctured disks collectively as the \textbf{colored punctured disks}.

\section{Red Disks Enter the Labyrinth}

\begin{figure}[ht!]
\labellist
\pinlabel $F$ at 60 34
\pinlabel $\alpha^{j+1}$ [l] at 380 65
\pinlabel $\alpha^j$  at 253 50
\pinlabel $R$ [r] at 150 150
\endlabellist
\centering
\includegraphics[scale=0.65]{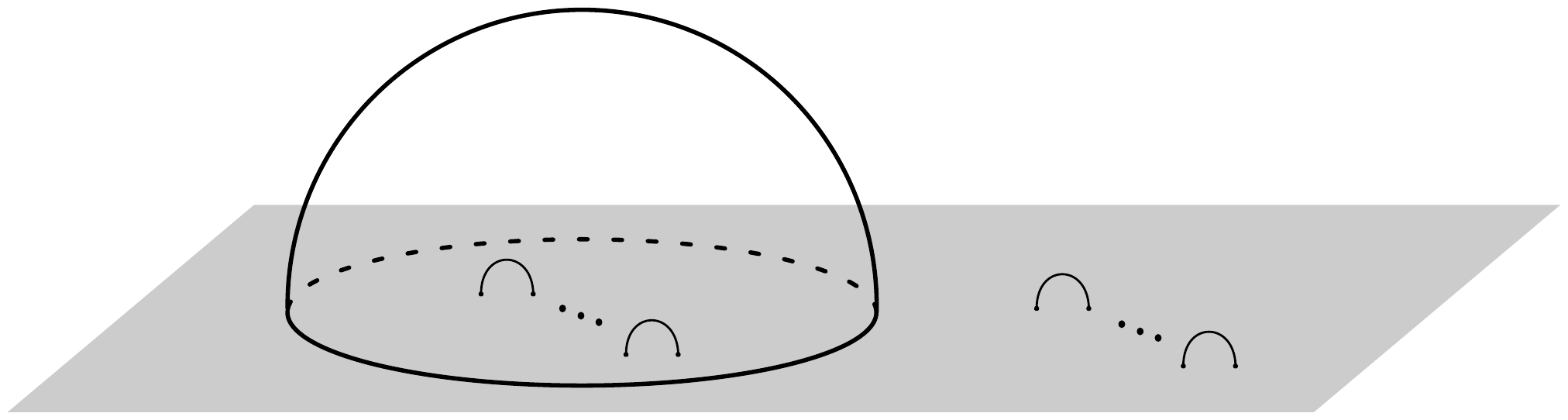} 
\caption{The red disk $R$ separates $\alpha^j$ from $\alpha^{j+1}$ for $2 \leq j \leq b-1$}
\label{fig:case1lem4.1}
\end{figure}

\begin{figure}[ht!]
\labellist
\pinlabel $F$ at 60 34
\pinlabel $\alpha^{1}$ [l] at 360 65
\pinlabel $\widetilde{D}$  at 213 100
\pinlabel $R$ [r] at 150 150
\endlabellist
\centering
\includegraphics[scale=0.65]{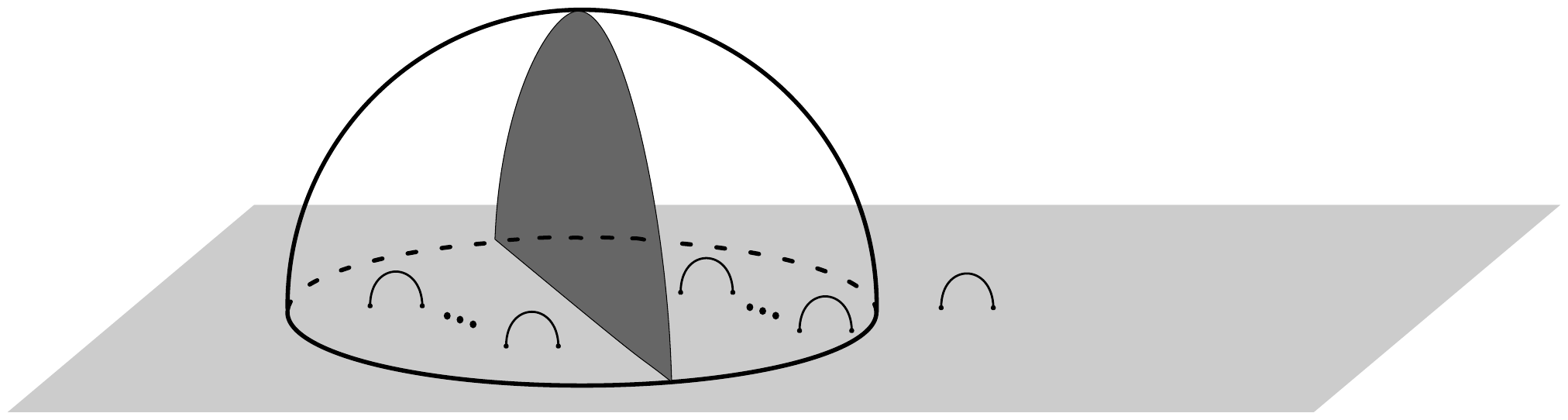} 
\caption{$\widetilde{D}$ lies in $M_+^1$ and divides $M_+^1$ into two components.}
\label{fig:case2lem4.1}
\end{figure}

\begin{lem}\label{lem:intersect_gamma_arc}
 If $R$ is a red disk above $F$, then $R \cap \left(\displaystyle\bigsqcup_{i=2}^{b-1} \gamma^i \right) \neq \emptyset.$
\end{lem}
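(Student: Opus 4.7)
The plan is to prove the contrapositive: if $R$ is a compressing disk in $M_+$ satisfying $R \cap (\gamma^2 \cup \cdots \cup \gamma^{b-1}) = \emptyset$, then $R$ is isotopic to $B$ inside the link complement, contradicting the assumption that $R$ is red. The core of the argument is a combinatorial parity chase along the straight arc $\Gamma$ that records which side of $\partial R$ each puncture of $L \cap F$ lies on.

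First I would exploit two rigidity properties of $R$. Since $R$ is a properly embedded disk in the $3$-ball $M_+$, it separates $M_+$ into two balls; and since $R \cap L = \emptyset$, each bridge $\alpha^j$ lies entirely in one of these two balls, which forces its endpoints $p_{2j-1}$ and $p_{2j}$ to lie in the same component of $F \setminus \partial R$. In particular, $|\partial R \cap \beta^j|$ is even for every $j \in \{1, \ldots, b\}$. By hypothesis, $|\partial R \cap \gamma^i| = 0$ for each $i \in \{2, \ldots, b-1\}$. Chaining these congruences along $\Gamma$ alternately through $\beta^2, \gamma^2, \beta^3, \gamma^3, \ldots, \beta^b$ shows that $p_3, p_4, \ldots, p_{2b}$ all lie in one common component of $F \setminus \partial R$, while $p_1, p_2$ lie in a (possibly different) common component. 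Hence the partition of bridges induced by $\partial R$ is either trivial (all bridges on one side) or exactly $(\{1\}, \{2, \ldots, b\})$. The trivial partition would make $\partial R$ bound a disk in $F$, incompatible with $R$ being an essential compressing disk, leaving only the partition induced by $\partial B$.

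To conclude, I would invoke two standard facts: first, a simple closed curve in the punctured sphere $F$ is determined up to isotopy by the partition of punctures it induces; second, two essential disks in a handlebody with isotopic boundary curves are isotopic within the handlebody, applied here to the handlebody $M_+ \setminus N(L)$. Combining these, $R$ is isotopic to $B$ through disks in $M_+ \setminus L$, contradicting the assumption that $R$ is red. The main obstacle I foresee is cleanly stating the second fact in the link-complement setting—both facts are classical, but care is needed that the isotopies are taken relative to $L$. Everything else is a routine parity chase, and crucially is independent of how many times $\partial R$ may cross each $\beta^j$ (so no minimization of $|\partial R \cap \Gamma|$ is required beforehand).
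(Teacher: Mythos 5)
There is a genuine gap. Your parity chase is correct: since $R$ is disjoint from $L$, each $\alpha^j$ lies in one component of $M_+\setminus R$, so $|\partial R\cap\beta^j|$ is even for every $j$, and combined with the hypothesis $\partial R\cap\gamma^i=\emptyset$ for $2\leq i\leq b-1$, this forces the partition of punctures induced by $\partial R$ to be either trivial or $\bigl(\{p_1,p_2\},\{p_3,\dots,p_{2b}\}\bigr)$. But your first ``standard fact'' --- that a simple closed curve in the punctured sphere $F$ is determined up to isotopy by the partition of punctures it induces --- is false whenever $F$ has at least four punctures (here $2b\geq 6$). There are infinitely many non-isotopic essential simple closed curves inducing the same partition: for instance, apply to $\partial B$ a Dehn twist along any curve that meets $\partial B$ essentially but has algebraic intersection number zero with it, such as a curve enclosing $\{p_1,p_2,p_3,p_4\}$. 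Indeed, the entire construction of this paper hinges on exactly this phenomenon: $\partial B'$, after being pushed up to level $h$, induces the same partition of punctures it always did, yet becomes extraordinarily convoluted (Figure \ref{fig:190131GeneralUnlinkDiagram}). So the partition alone cannot certify $\partial R\simeq\partial B$, and the argument does not close.

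The paper's proof avoids this pitfall by obtaining strictly more than the partition. In the case where all of $\alpha^2,\dots,\alpha^b$ lie on one side of $R$, it first asks whether $R$ can be isotoped off the bridge disks $\mathcal{D}=D^2\cup\cdots\cup D^b$. If it can, and if also $\partial R\cap\gamma^j=\emptyset$ for $j\geq 2$, then $\partial R$ is disjoint from the \emph{entire} arc $\xi=\beta^2\cup\gamma^2\cup\cdots\cup\beta^b$ (not merely crossing each piece an even number of times). That literal disjointness confines $\partial R$ to a twice-punctured disk, where the essential isotopy class \emph{is} unique, and only then may one conclude $\partial R\simeq\partial B$ and hence, via your second (correct) fact about disks in the handlebody $M_+\setminus N(L)$, that $R$ is isotopic to $B$. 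If instead $R\cap\mathcal{D}\neq\emptyset$, an outermost-arc argument yields a subdisk $\widetilde D\subset D^k$ that separates some $\alpha^j$ from $\alpha^{j+1}$ inside $M_+^1$; since $\gamma^j$ is disjoint from all bridge disks, it cannot cross $\widetilde D$, so it must cross $\partial R$. Your proof proposal would need this extra machinery; the parity chase by itself is not enough, precisely because it is insensitive to how many times $\partial R$ crosses each $\beta^j$ --- the feature you flagged as an advantage is in fact the source of the gap.
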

\begin{proof}
The red disk $R$ separates $M_+$ into two components $M_+^1 \cup M_+^2$. We consider two cases:

\textbf{Case 1:} Suppose that $\alpha^2, \cdots ,\alpha^b$ do not lie in the same component. 
This means that for some index $2\leq j \leq b-1$, the arcs $\alpha^j$ and $\alpha^{j+1}$ lie in different components as in Figure \ref{fig:case1lem4.1}. 
Since $\gamma^j$ connects an endpoint of $\alpha^{j}$ to an endpoint of $\alpha^{j+1}$ in $F$, it holds that $R \cap \gamma^j \neq \emptyset.$

\textbf{Case 2:} Suppose that $\alpha^2,\cdots,\alpha^b$ lie in the same component $M_+^1$ of $M_+ \backslash R$, as in Figure \ref{fig:case2lem4.1}.
Observe that for $\partial R$ to be nontrivial in $F$, the disk $R$ must separate $\alpha^1$ from $\alpha^2,\cdots,\alpha^b$.
Let $\mathcal{D}$ denote the union of bridge disks \(D^2,D^3\cdots,D^b\).
If $R$ can be made disjoint from $\mathcal{D},$ either \(\partial R\) intersects \(\gamma^j\) for some \(j\geq 2\) (in which case, we are done), or $\partial R$ separates $F$ into a disk containing $\partial \beta^1$ and a disk containing $\xi = \beta^2 \cup \gamma^2 \cup \cdots \cup \gamma^{b-1} \cup \beta^b$.
This implies that $R$ is isotopic to the blue disk above $F$, which is a contradiction.
Therefore we may assume that $R$ intersects $\mathcal{D}$ transversely and minimally so that $R \cap \mathcal{D}$ consists of a positive number of arc components. 

Thus there exists some \(k\) such that $R \cap D^k \neq \emptyset.$
Then, there is an arc of intersection $\widetilde{\alpha}$ on $D^k$ and a subarc $\widetilde{\beta}$ of $\beta^k$ such that $\widetilde{\alpha} \cup \widetilde{\beta}$ cobound an outermost disk $\widetilde{D} \subset D^k$.
See Figure \ref{fig:case1lem5.1}.
By minimality of $|R \cap D^k|$, the disk $\widetilde{D}$ cannot lie in $M_+^2$, and so $\widetilde{D}$ lies in $M_+^1$ and divides $M_+^1$ into two components.
If $\alpha^2,\cdots,\alpha^b$ all lie in the same component of $M_+^1\backslash \widetilde{D}$, then the other side of $\widetilde{D}$ is a 3-ball that gives rise to an isotopy reducing $|R \cap \widetilde{D}|$, a contradiction.
This means that for some index $2\leq j \leq b-1$, the arcs $\alpha^j$ and $\alpha^{j+1}$ lie in different components of $M_+^1\backslash \widetilde{D}$.
The arc $\gamma^j$ does not intersect any bridge disks, so in particular,  $\gamma^j$ does not intersect \(\widetilde{D}\).
Therefore \(\gamma^j\) must intersect $\partial R$ nontrivially in order to connect $\alpha^j$ with $\alpha^{j+1}$.
Thus, $R \cap\left(\displaystyle\bigsqcup_{i=2}^{b-1} \gamma^i \right)\neq \emptyset$ as desired.
\end{proof}

\begin{figure}[ht!]
\labellist
\pinlabel $\widetilde{\beta}$ at 285 18
\pinlabel $F$ at 50 30
\pinlabel $\widetilde{D}$  at 287 55
\pinlabel $D^k$ at 293 150
\pinlabel $\gamma^k$ at 430 55
\pinlabel $\gamma^{k-1}$ at 150 55
\pinlabel $\alpha^k$ at 220 155
\endlabellist
\centering
\includegraphics[scale=0.65]{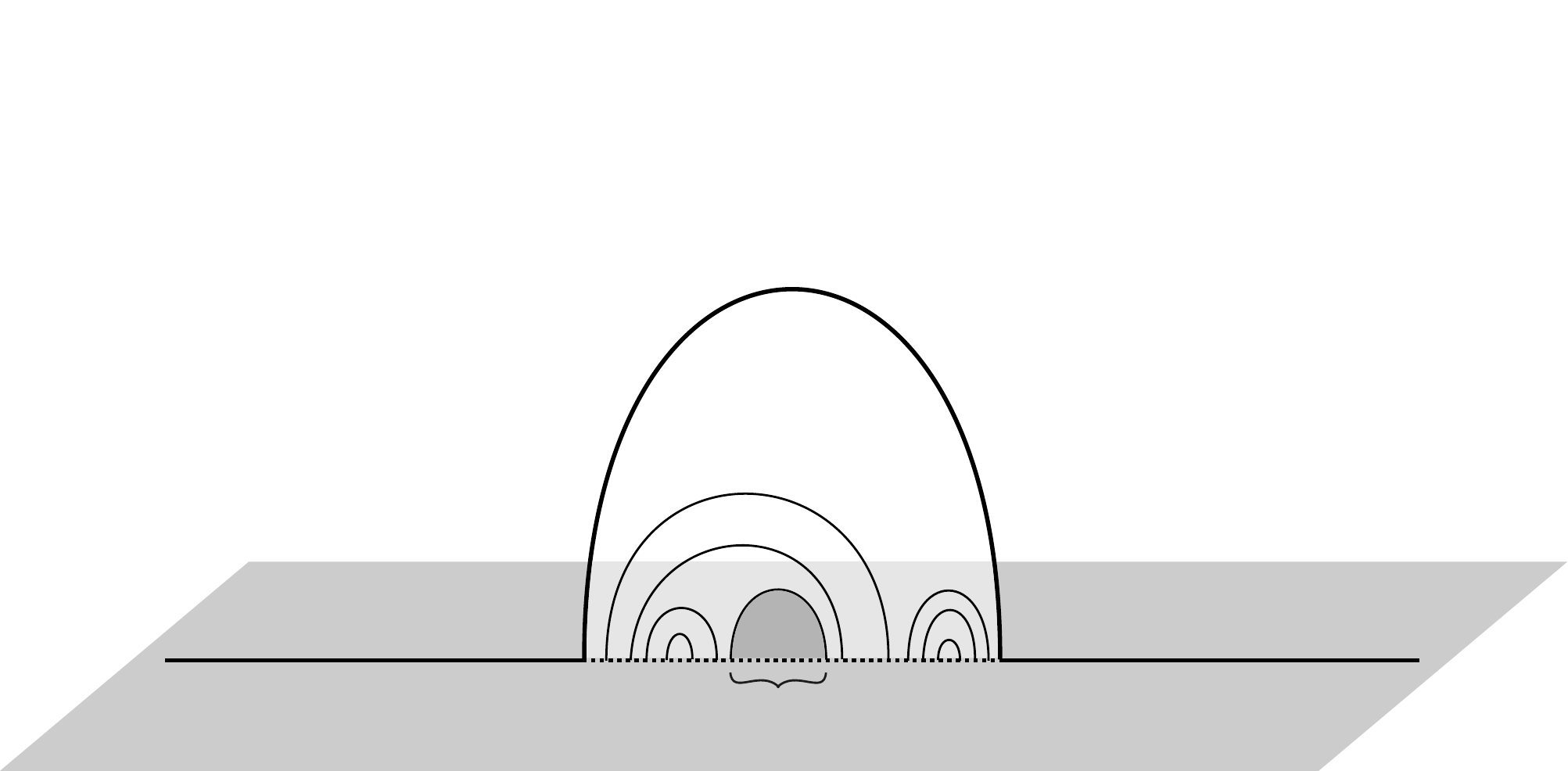} 
\caption{Possible arcs of intersection $R \cap D^k$}
\label{fig:case1lem5.1}
\end{figure}
\begin{cor}\label{cor:intersect_boundaryB_or_Gates}
 If $R$ is a red disk above $F$, then $R \cap \left(\partial B' \cup \displaystyle\bigsqcup_{i=1}^{h-1} G^i \right) \neq \emptyset.$
\end{cor}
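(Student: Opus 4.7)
The plan is to argue by contradiction: suppose $\partial R$ is disjoint from $\partial B' \cup \bigsqcup_{i=1}^{h-1} G^i$. Since $\partial R$ is connected, it lies in a single component $C$ of $F \setminus (\partial B' \cup \bigsqcup_i G^i)$.

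First I would describe the components of $F \setminus (\partial B' \cup \bigsqcup_i G^i)$ by extending Proposition \ref{prop:flower}. The complement $F \setminus \text{Lab}$ is a regular neighborhood of $\beta^1 \cup \gamma^1$ and contains exactly three punctures: the two endpoints of $\alpha^1$ and the left endpoint of $\alpha^2$. Gluing this disk along $\partial \text{Lab}$ to the annulus of Proposition \ref{prop:flower} yields a single three-punctured disk. Hence $F \setminus (\partial B' \cup \bigsqcup_i G^i)$ consists of $h-1$ once-punctured disks, one twice-punctured disk (the region of Lab lying inside $\partial B'$, containing the two endpoints of $\alpha'$), and one three-punctured disk. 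I would also record the observation that since $R \subset M_+$ is disjoint from every upper bridge $\alpha^j$, both endpoints of $\alpha^j$ lie on the same side of $\partial R$ in $F$.

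Then I would split into cases on $C$. If $C$ is a once-punctured disk, any essential simple closed curve in $C$ encircles its single puncture, splitting some bridge's endpoint pair---a contradiction. If $C$ is the twice-punctured disk, then $\partial R$ parallel to $\partial C$ separates the two endpoints of $\alpha'$ from all other punctures; I would argue, using the alternating 2-twisted plat structure, that these two punctures are not both endpoints of any single upper bridge, again splitting some bridge. If $C$ is the three-punctured disk, the only essential configuration avoiding a bridge split is $\partial R$ encircling exactly the two endpoints of $\alpha^1$; this forces $\partial R$ isotopic in $F$ to $\partial B$, hence $R$ isotopic to the blue disk $B$, contradicting that $R$ is red. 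Lemma \ref{lem:intersect_gamma_arc} guarantees that $\partial R$ actually enters Lab, ensuring the hypothesis is not vacuous.

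I expect the most delicate step to be the twice-punctured disk case: verifying that the two endpoints of $\alpha'$ at level $h$ are not both endpoints of a single upper bridge $\alpha^k$. This requires careful tracking of the permutation on punctures induced by the alternating rows of twist regions in the square plat; without this ingredient, $\partial R$ could be isotopic to $\partial B^k$ for some $k$ and $R$ isotopic to the red disk $B^k$, failing to yield a contradiction.
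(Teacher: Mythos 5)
Your overall plan---assume $\partial R$ is disjoint from $\partial B'$ and the gates, so it lies in a single component of the complement, then rule out each component---is sound, and it reorganizes the paper's argument. The paper instead uses Lemma~\ref{lem:intersect_gamma_arc} directly: since $\partial R$ must meet some $\gamma^j$ with $j\geq 2$, and those arcs lie entirely in the colored disks and the twice-punctured disk, $\partial R$ must enter one of those regions; then it shows $\partial R$ cannot lie entirely in a colored disk (its boundary would be puncture-parallel) nor entirely in the twice-punctured disk (using non-splitness), so $\partial R$ crosses the boundary of one of these regions, i.e.\ a gate or $\partial B'$.

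Two concrete issues with your version. First, the three-punctured disk case has a gap: in a disk with three punctures (equivalently a four-holed sphere), simple closed curves enclosing a fixed pair of punctures are \emph{not} unique up to isotopy---applying a Dehn twist about a curve meeting $\partial R$ produces a non-isotopic curve enclosing the same two punctures---so ``$\partial R$ encircles the two endpoints of $\alpha^1$'' does not force $\partial R$ to be isotopic to $\partial B$ in $F$. Fortunately this case never needs analysis: Lemma~\ref{lem:intersect_gamma_arc} says $\partial R$ meets some $\gamma^j$ with $j\geq 2$, and those arcs lie in the colored disks and twice-punctured disk, never in the component you call $C$ (which is built from the annulus of Proposition~\ref{prop:flower} and a neighborhood of $\beta^1\cup\gamma^1$). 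So $\partial R\subset C$ is already impossible, and the bridge-splitting analysis there is superfluous. You use Lemma~\ref{lem:intersect_gamma_arc} only to check non-vacuity, when it actually eliminates this entire case.

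Second, for the twice-punctured disk your plan to ``track the permutation on punctures'' is far more work than needed and would in effect re-derive non-splitness. The paper observes that if $\partial R$ were parallel to $\partial B'$, then $R\cup B'$ would be a reducing sphere for $L$; but $L$ has a non-split, reduced, alternating diagram (the rows alternate sign), so by Menasco's theorem $L$ is non-split, a contradiction. Your observation that $\partial R$ cannot split a bridge's endpoint pair is correct and handles the once-punctured disks cleanly, but using it in the twice-punctured disk case requires proving the two enclosed punctures are not the endpoints of a single upper bridge, which is exactly the content that Menasco's theorem lets you bypass.
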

\begin{proof}
Recall Proposition \ref{prop:flower}: \(\partial B'\) and the gates cut Lab into \(h+1\) components: \(h-1\) colored punctured disks, an annulus, and a twice-punctured disk.
For \(j\geq 2\), \(\gamma^j\) lies entirely in the union of the colored disks and the twice-punctured disk.
(This is clear from Figure \ref{fig:190131GeneralUnlinkDiagram}.)
Since \(\partial R\) must intersect \(\gamma^j\) for some \(j\geq 2\), \(\partial R\) must intersect at least one colored disk or the twice-punctured disk.

On the other hand, since the boundary of a compressing disk cannot be parallel to a puncture, $\partial R$ cannot lie entirely in any of the colored punctured disks. 
\(L\) is an alternating link (since the signs of the rows of twist numbers alternate), and it has a non-split, reduced,  alternating diagram. 
Therefore, by Theorem 1 from \cite{menasco1984closed}, \(L\) is a non-split link.
Since $L$ is nonsplit, $\partial R$ cannot be parallel to $\partial B'$, which implies that $\partial R$ cannot entirely lie in the twice punctured-disk. 
It follows that \(\partial R\) must intersect either a gate or \(\partial B'\).
\end{proof}

\section{All red disks above \(F\) intersect all blue disks below.}

Let \(\mathcal{D}=D^2\cup D^3\cup\cdots \cup D^b\) as before, and let \(\mathcal{R}\) be the subset of red compressing disks above \(F\) which are disjoint from \(\beta'\).
Throughout the rest of this paper, whenever \(\mathcal{R}\) is nonempty, assume that \(R\) is a disk in \(\mathcal{R}\) such that \(|R\cap\mathcal{D}|\leq|R'\cap\mathcal{D}|\) for all \(R'\in\mathcal{R}\), and assume \(R\) is in minimal position with respect to \(\mathcal{D}\) and with respect to the gates.
For each \(i\), \(\beta'\) cuts \(\beta^i\) into multiple subarcs, which we call \textit{lanes}.

The following is Lemma 6.1 from \cite{rodman2018infinite}.
We will not reproduce the proof here.

\begin{lem}\label{lem:two_points_same_lane}
Assume \(\mathcal{R}\) is nonempty.
If, for some \(i\), two points of \(R\cap\beta^i\) lie in the same lane, then they must be endpoints of different components of \(R\cap D^i\).
\end{lem}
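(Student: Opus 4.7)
My plan is to argue by contradiction using a standard compressing-disk surgery. Suppose \(p,q\in R\cap\beta^i\) lie in the same lane of \(\beta^i\) yet are the two endpoints of a single component \(\alpha\) of \(R\cap D^i\). Let \(\sigma\subset\beta^i\) be the subarc between \(p\) and \(q\); since \(p\) and \(q\) lie in the same lane, \(\sigma\) is disjoint from \(\beta'\). The arcs \(\alpha\) and \(\sigma\) cobound a subdisk \(E\subset D^i\). Among all hypothetical triples \((p,q,\alpha)\) satisfying the contradiction hypothesis, I would choose one so that \(E\) is innermost on \(D^i\), meaning its interior is disjoint from every other component of \(R\cap D^i\). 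This is achievable: any arc of \(R\cap D^i\) in the interior of \(E\), being disjoint from \(\alpha\), would have both endpoints on \(\sigma\) and hence in the same lane, yielding a strictly smaller such disk and violating minimality.

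Next, I would perform the standard cut-and-paste surgery along \(E\). The arc \(\alpha\) splits \(R\) into two subdisks \(R_1,R_2\) with \(\partial R_j=\alpha\cup\tau_j\), where \(\tau_1,\tau_2\) are the two subarcs of \(\partial R\) between \(p\) and \(q\). Pushing \(E\) to parallel copies \(E^+,E^-\) slightly off \(D^i\) to either side in \(M_+\), I set \(R_j'=R_j\cup E^{\pm}\). Each \(R_j'\) is a disk properly embedded in \(M_+\) with boundary \(\tau_j\cup\sigma\subset F\). Because distinct bridge disks are pairwise disjoint and the interior of \(E\) meets \(R\) only along \(\alpha\), each \(R_j'\) satisfies \(|R_j'\cap\mathcal{D}|<|R\cap\mathcal{D}|\): the arc \(\alpha\) has been removed and no new intersections with \(\mathcal{D}\) are introduced.

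To contradict the minimality of \(|R\cap\mathcal{D}|\) over \(\mathcal{R}\), I must exhibit some \(R_j'\in\mathcal{R}\). Both \(\partial R_j'\) are disjoint from \(\beta'\), since \(\partial R\cap\beta'=\emptyset\) (as \(R\in\mathcal{R}\)) and \(\sigma\) lies in a lane. A standard band-surgery argument then shows at least one of \(\partial R_1',\partial R_2'\) is essential in \(F\), for otherwise \(\partial R\), recovered by reversing the band sum, would itself bound a disk in \(F\), contradicting that \(R\) is a compressing disk.

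The main obstacle is verifying that at least one of the essential surgery products is red, i.e., not isotopic to the unique blue disk \(B\) above \(F\). I would handle this by a case analysis on the isotopy classes of \(\partial R_1',\partial R_2'\) in \(F\): if both are parallel to \(\partial B\), or if one is parallel to \(\partial B\) while the other bounds a disk in \(F\), then reversing the band sum along \(\sigma\) forces \(\partial R\) to be parallel to \(\partial B\) itself, making \(R\) isotopic to \(B\) and contradicting the fact that \(R\in\mathcal{R}\) is red. Hence at least one \(R_j'\) is a red compressing disk disjoint from \(\beta'\) with strictly fewer intersections with \(\mathcal{D}\) than \(R\), contradicting the choice of \(R\) and completing the proof.
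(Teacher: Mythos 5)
Your argument is the standard outermost-bigon/boundary-compression argument, and it is essentially correct. Note that the paper does not reproduce its own proof of this lemma --- it cites Lemma 6.1 of \cite{rodman2018infinite} --- but your approach is surely the intended one, since the entire ambient setup (choosing $R\in\mathcal{R}$ to minimize $|R\cap\mathcal{D}|$) exists precisely to make this kind of surgery yield a contradiction. Two small remarks worth fleshing out: first, your innermost choice of $E$ does a bit more than you claim --- since the interior of $\sigma$ lies on the boundary of $E$ only (not of $D^i\setminus E$), any arc of $R\cap D^i$ with an endpoint in $\mathrm{int}(\sigma)$ would have to enter $E$, so innermostness actually guarantees $\mathrm{int}(\sigma)\cap\partial R=\emptyset$, which is needed for the surgered curves $\partial R_j'$ to be embedded. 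Second, in the essentiality step you only rule out ``both bound disks,'' but a priori a surgered curve could bound a once-punctured disk in $F$; this case should be dispatched explicitly by observing that each $R_j'$ is a properly embedded disk in $M_+$ disjoint from $L$, so it separates the bridge arcs of $L\cap M_+$ and therefore must enclose an \emph{even} number of punctures on each side. With that parity observation in hand, your dichotomy (disk vs.\ essential) is exhaustive and the rest of the argument, including the check that one of the resulting essential disks is red and disjoint from $\beta'$, goes through.
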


By nature of being an element of \(\mathcal{R}\), \(R\) is disjoint from \(\beta'\), and so it follows that \(R\) is also disjoint from \(\partial B'\) (since \(\partial B'\) is the boundary of a regular neighborhood of \(\beta'\)).
By Corollary \ref{cor:intersect_boundaryB_or_Gates}, \(R\) must intersect a gate, and so for some \(i\), \(R\) intersects the \(i\)-colored disk.
Thus the following definition is not vacuous.
For each \(i\), we define each component of intersection of \(\partial R\) with the \(i\)-colored disk to be an \textit{\(i\)-colored track.}
Observe that the colored tracks are pairwise disjoint and have endpoints on the gate of the corresponding color.

For each bridge disk \(D^j\), define the components of \(D^j\cap R\) to be \textit{number \(j\) arcs}, and define the endpoints of each number \(j\) arc to be \textit{number \(j\) points}.\footnote{As a point of notational clarification, we are using \underline{numbers} to label the components of \(\mathcal{D}\) and the arcs \(\mathcal{D}\cap R\) and points \(\mathcal{D}\cap\partial R\) which are contained in \(\mathcal{D}\).
We are using \underline{colors} to label the once-punctured disks of Lab, the gates (which are boundary subarcs of the once-punctured disks), and the tracks (which are components of intersection of the once-punctured disks with \(\partial R\)).
In short, numbers refer to \(\mathcal{D}\), and colors refer to the colored punctured disks.
}

The following lemma is essentially Lemma 6.2 from \cite{rodman2018infinite}, though it is slightly generalized to the case where \(b\geq 3\).
We will not reproduce the proof here since it is the basically same except where we have to consider lanes \(\beta^3,\cdots,\beta^b\) instead of only \(\beta^3\) and \(\beta^4\).
(Also, the proof mentions the disk \(U_2^3\), which in our more general context is \(U_2^{b-1}\).)

\begin{lem}\label{lem:most_numbered_points_lie_in_tracks}
If \(\mathcal{R}\) is nonempty, then at least one endpoint of each number \(2\) arc must lie in a track, and all number \(i\) points must lie in tracks for \(3\leq i\leq b\).
\end{lem}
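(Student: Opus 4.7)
The plan is to argue by contradiction using an outermost-disk argument on the bridge disks, closely paralleling the proof of Lemma 6.2 in \cite{rodman2018infinite} while accounting for the additional lanes $\beta^3, \ldots, \beta^b$.

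A first observation underpins the whole argument: because $R \in \mathcal{R}$ is disjoint from $\partial B'$, any component of $\partial R$ that meets the interior of a colored punctured disk of Lab must enter and exit through the corresponding gate, hence is a track. Thus a number-$i$ point $p \in \partial R \cap \beta^i$ lies on a track if and only if $p$ lies in the interior of some colored punctured disk; otherwise $p$ lies in the annulus or in the twice-punctured disk of Proposition \ref{prop:flower}. Combining Proposition \ref{prop:min_pos} with the combinatorics of Figure \ref{fig:190131GeneralUnlinkDiagram}, I first pin down where each $\beta^i$ sits inside Lab and, in particular, identify which subarcs of $\beta^i$ are the only possible locations for ``bad'' (off-track) endpoints.

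Now suppose the conclusion fails, so some number-$i$ arc $a \subset R \cap D^i$ has a bad endpoint for $i \ge 3$, or has both endpoints bad for $i = 2$. Among all failing arcs I take an outermost arc $\widetilde{\alpha}$ of $R \cap D^i$ on $D^i$, cobounding with a subarc $\widetilde{\beta} \subset \beta^i$ an outermost disk $\widetilde{D} \subset D^i$. Using $\widetilde{D}$ together with a collar of $\widetilde{\beta}$ in $F$, I isotope $R$ by pushing $\widetilde{\alpha}$ across $D^i$ to obtain a new disk $R'$, and aim to show that $R' \in \mathcal{R}$ while $|R' \cap \mathcal{D}| < |R \cap \mathcal{D}|$, contradicting the minimality of $R$.

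The main obstacle is to verify that this isotopy preserves disjointness from $\beta'$, so that $R'$ still belongs to $\mathcal{R}$. Here the location data from the first step carries the weight: because $\widetilde{\beta}$ connects two bad endpoints, it runs entirely through the twice-punctured disk (and possibly the annulus), which the earlier analysis shows is disjoint from $\beta'$. The decisive technical input is Lemma \ref{lem:two_points_same_lane}: if both endpoints of $\widetilde{\alpha}$ lay in the same lane of $\beta^i$, they would necessarily come from different components of $R \cap D^i$, which combined with the outermost choice of $\widetilde{\alpha}$ produces an immediate contradiction; hence the endpoints lie in different lanes, and $\widetilde{\beta}$ can be traced cleanly through Lab. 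The case $i = 2$ is the trickiest, since only one endpoint of a number-$2$ arc need be on a track and the straight outermost argument does not close. Following the $b = 4$ template, I will instead introduce an auxiliary subdisk $U_2^{b-1}$ (the analogue of $U_2^3$ in \cite{rodman2018infinite}) whose boundary avoids the tracks, and surger $R$ along $U_2^{b-1}$ to produce a disk in $\mathcal{R}$ with strictly fewer intersections with $\mathcal{D}$, once again contradicting the minimality of $R$.
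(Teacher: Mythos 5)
The paper does not reproduce this proof; it defers to Lemma~6.2 of \cite{rodman2018infinite} with minor relabeling, so you are being judged on the internal consistency of your reconstruction. That reconstruction has a genuine logical flaw in the $i \geq 3$ case. You invoke Lemma~\ref{lem:two_points_same_lane} to conclude that the two endpoints of $\widetilde{\alpha}$ lie in \emph{different} lanes of $\beta^i$, and then assert that $\widetilde{\beta}$ ``can be traced cleanly through Lab'' and is disjoint from $\beta'$. These two statements are incompatible: by definition, lanes are the subarcs of $\beta^i$ cut off by $\beta'$, so two points of $\beta^i$ lie in different lanes precisely when the subarc of $\beta^i$ joining them crosses $\beta'$. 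Thus $\widetilde{\beta}$ necessarily meets $\beta'$, and the isotopy of $R$ across $\widetilde{D}$, which drags $\partial R$ through a collar of $\widetilde{\beta}$, produces a disk $R'$ with $R' \cap \beta' \neq \emptyset$. That means $R' \notin \mathcal{R}$, so the minimality of $|R \cap \mathcal{D}|$ is never contradicted and the argument does not close.

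A second, related error is the geometric claim that the twice-punctured disk of Proposition~\ref{prop:flower} is disjoint from $\beta'$. It is the opposite: $\partial B'$ is the boundary of a regular neighborhood of $\beta'$ in $F$, so the twice-punctured disk it bounds is precisely that neighborhood and \emph{contains} $\beta'$. In fact, since $\partial R$ is disjoint from $\partial B'$ and cannot lie entirely inside the twice-punctured disk (else $\partial R$ would be inessential), $\partial R$ never enters that region at all; any off-track numbered point must lie in the annulus component. So the picture you are using to argue disjointness is upside-down. There is also a smaller issue worth flagging: you write ``among all failing arcs I take an outermost arc $\widetilde{\alpha}$ of $R \cap D^i$ on $D^i$, cobounding \ldots an outermost disk $\widetilde{D}$,'' but an arc outermost among failing arcs need not be outermost in $D^i$, so the disk it cuts off may contain other arcs of $R \cap \mathcal{D}$ and the isotopy you describe would not reduce $|R \cap \mathcal{D}|$. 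You should decide whether to take an outermost arc of $R \cap D^i$ (and then argue it is a failing arc) or restructure the contradiction so that it does not require $\widetilde{D}$ to be innermost-clean. As it stands, the core mechanism you propose for $i \geq 3$ does not work, and something more delicate—using the actual geometry of which lanes the bad points can occupy, rather than the blanket different-lanes conclusion—is needed.
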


\begin{figure}
\centering
\labellist \small\hair 2pt
% \pinlabel {\(C_1\)} at 18 100
\pinlabel {
    \begin{rotate}{270}
    {\tiny
    1-colored track
    }
    \end{rotate}
    }
    at 13 135
% \pinlabel {\(C_2\)} at 63.125 100 % add 361/8 to each x-value
\pinlabel {
    \begin{rotate}{270}
    {\tiny
    2-colored track
    }
    \end{rotate}
    }
    at 58.125 135
% \pinlabel {\(C_3\)} at 108.25 100
\pinlabel {
    \begin{rotate}{270}
    {\tiny
    3-colored track
    }
    \end{rotate}
    }
    at 103.25 135
% \pinlabel {\(C_4\)} at 153.38 100
\pinlabel {
    \begin{rotate}{270}
    {\tiny
    4-colored track
    }
    \end{rotate}
    }
    at 148.38 135
% \pinlabel {\(C_5\)} at 198.5 100
\pinlabel {
    \begin{rotate}{270}
    {\tiny
    5-colored track
    }
    \end{rotate}
    }
    at 193.5 135
% \pinlabel {\(C_6\)} at 243.63 100
\pinlabel {
    \begin{rotate}{270}
    {\tiny
    6-colored track
    }
    \end{rotate}
    }
    at 238.63 135
% \pinlabel {\(C_7\)} at 288.75 100
\pinlabel {
    \begin{rotate}{270}
    {\tiny
    7-colored track
    }
    \end{rotate}
    }
    at 283.75 135
% \pinlabel {\(C_8\)} at 333.88 100
\pinlabel {
    \begin{rotate}{270}
    {\tiny
    8-colored track
    }
    \end{rotate}
    }
    at 328.88 135
% \pinlabel {\(C_9\)} at 379 100
\pinlabel {
    \begin{rotate}{270}
    {\tiny
    9-colored track
    }
    \end{rotate}
    }
    at 374 135
\pinlabel {\(\dots\)} at 410 100
%
% \pinlabel {\(C_{h-2}\)} [l] at 440 100
\pinlabel {
    \begin{rotate}{270}
    {\tiny
    \((h-2)\)-colored track
    }
    \end{rotate}
    }
    at 450 150
% \pinlabel {\(C_{h-1}\)} [l] at 485.125 100
\pinlabel {
    \begin{rotate}{270}
    {\tiny
    \((h-1)\)-colored track
    }
    \end{rotate}
    }
    at 493.125 150
%
% Top row of numbers
\pinlabel {\(3\)} at 18 184
\pinlabel {\(3\)} at 63.125 184
\pinlabel {\(4\)} at 108.25 184
\pinlabel {\(4\)} at 153.38 184
\pinlabel {\(5\)} at 198.5 184
\pinlabel {\(5\)} at 243.63 184
\pinlabel {\(6\)} at 288.75 184
\pinlabel {\(6\)} at 333.88 184
\pinlabel {\(7\)} at 379 184
\pinlabel {\(b-1\)} [l] at 445 184
\pinlabel {\(b\)} [l] at 490.125 184
%
% Second row of numbers
\pinlabel {\(2\)} at 18 168
\pinlabel {\(4\)} at 63.125 168
\pinlabel {\(3\)} at 108.25 168
\pinlabel {\(5\)} at 153.38 168
\pinlabel {\(4\)} at 198.5 168
\pinlabel {\(6\)} at 243.63 168
\pinlabel {\(5\)} at 288.75 168
\pinlabel {\(7\)} at 333.88 168
\pinlabel {\(6\)} at 379 168
\pinlabel {\(b\)} [l] at 445 168
\pinlabel {\(b-1\)} [l] at 490.125 168

%
% Penultimate row of numbers
\pinlabel {\(2\)} at 18 30
\pinlabel {\(4\)} at 63.125 30
\pinlabel {\(3\)} at 108.25 30
\pinlabel {\(5\)} at 153.38 30
\pinlabel {\(4\)} at 198.5 30
\pinlabel {\(6\)} at 243.63 30
\pinlabel {\(5\)} at 288.75 30
\pinlabel {\(7\)} at 333.88 30
\pinlabel {\(6\)} at 379 30
\pinlabel {\(b\)} [l] at 445 30
\pinlabel {\(b-1\)} [l] at 490.125 30

%
%
% Bottom row of numbers
\pinlabel {\(3\)} at 18 15
\pinlabel {\(3\)} at 63.125 15
\pinlabel {\(4\)} at 108.25 15
\pinlabel {\(4\)} at 153.38 15
\pinlabel {\(5\)} at 198.5 15
\pinlabel {\(5\)} at 243.63 15
\pinlabel {\(6\)} at 288.75 15
\pinlabel {\(6\)} at 333.88 15
\pinlabel {\(7\)} at 379 15
\pinlabel {\(b-1\)} [l] at 445 15
\pinlabel {\(b\)} [l] at 490.125 15
\endlabellist
    \includegraphics[width=.7\textwidth]{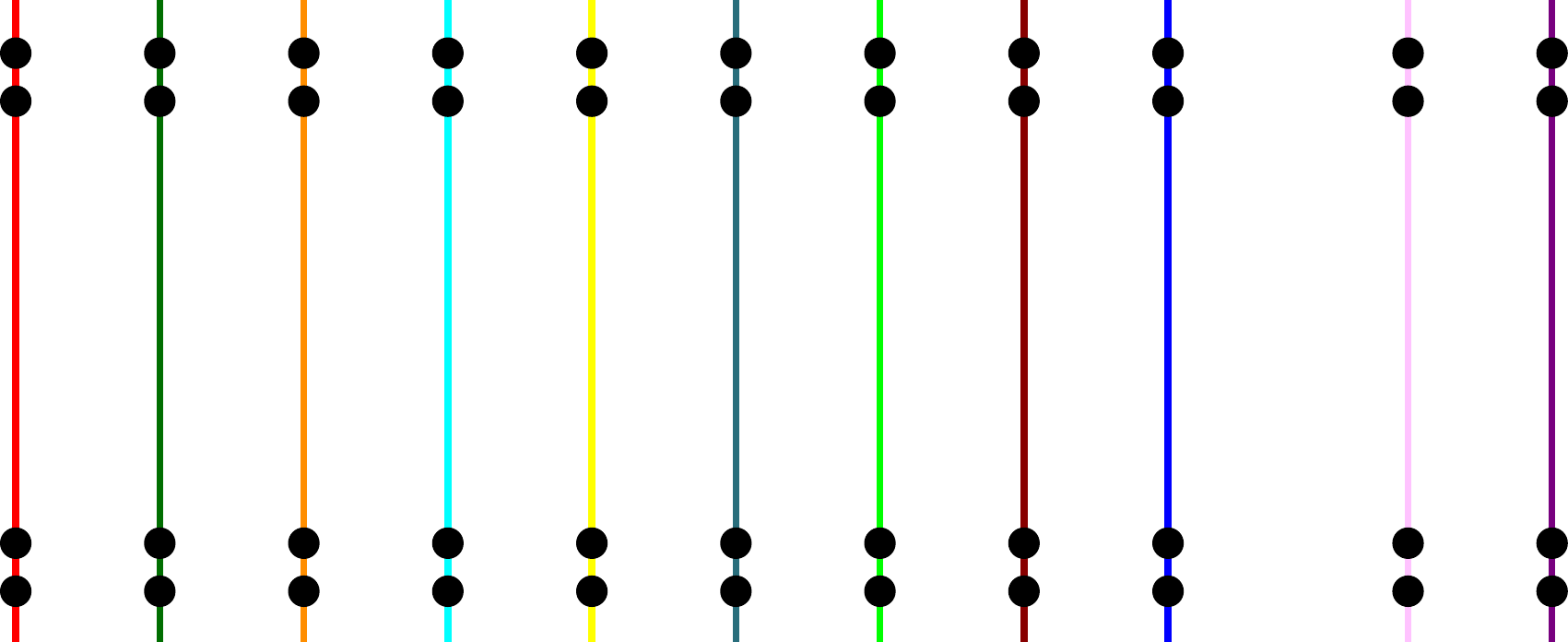}
    \caption{Each colored track has a sequence of numbered points along it, and we can tell what color the track is just from looking at the first and second numbered points.
    Note: there are always \(h-1\) colored tracks.}
    \label{fig:190307TrackSequences2}
\end{figure}

\begin{lem}\label{lem:tracksequences}
The color of each track is uniquely determined by the first and second points in its sequence of numbered points.
\end{lem}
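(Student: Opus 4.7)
The plan is to verify the conclusion by direct inspection of the unlink diagram in Figure \ref{fig:190131GeneralUnlinkDiagram}, reading off the first and second numbered points of each track from the local structure near each gate.

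First, I would observe that any track $t$ in the $i$-colored disk has both endpoints on $G^i$ (since \(R \in \mathcal{R}\) is disjoint from \(\partial B'\)) and must separate the single puncture of that disk from the rest of it; otherwise, $\partial R$ would be trivial in the $i$-colored disk and could be pushed off the gate, contradicting minimality. Consequently, as we travel along $t$ starting from one endpoint on $G^i$, the first intersection with $\mathcal{D}$ occurs at the $\beta^j$ whose puncture lies at the appropriate endpoint of $G^i$, and the second intersection occurs at the $\beta^j$ on the other side.

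Second, I would determine for each $i$ which pair of $\beta^j$ arcs the endpoints of $G^i$ lie on. By Corollary \ref{cor:lab_gate_locations}, every crossing in the unlink diagram realizes the bottom configuration of Figure \ref{fig:190131multi_strand_crossing_general}, which rigidly fixes the local picture near each gate. Combining this with the alternating top/bottom placement of even- and odd-indexed gates visible in Figure \ref{fig:190131GeneralUnlinkDiagram}, one reads off that an odd gate $G^{2k-1}$ has its endpoints on $\beta^{k+1}$ and $\beta^{k+2}$, while an even gate $G^{2k}$ has its endpoints on $\beta^{k+2}$ and $\beta^{k+3}$.

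Combining these two observations, the (first, second) pair of numbered points on the $i$-colored track is $(k+2, k+1)$ when $i = 2k-1$ is odd and $(k+2, k+3)$ when $i = 2k$ is even; these are precisely the top two rows of Figure \ref{fig:190307TrackSequences2}. Since these ordered pairs are pairwise distinct across all $i \in \{1, \ldots, h-1\}$, the color is uniquely determined. The main obstacle is the bookkeeping in the second step—correctly tracing the gate endpoints through the alternating over/under structure of the diagram, where odd and even gates play asymmetric roles—after which the uniqueness follows by inspection.
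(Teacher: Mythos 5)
Your proposal is correct and follows essentially the same path as the paper: both establish that the color of a track determines an ordered (first, second) pair of numbered points by inspecting the local picture near each gate (the paper does this via Figures 11--14, considering all possible routes through the lanes near the gate), and both then check that these ordered pairs are distinct across colors (you via the explicit formulas $(k+2,k+1)$ and $(k+2,k+3)$, the paper via a parity case analysis). Your preliminary observation that a track must separate the puncture of its colored disk is a nice streamlining of why the track is forced into a definite shape, though note that strictly speaking the gates have endpoints on $\partial B'$ rather than on the $\beta^j$ arcs, and the order of the first two intersections (which of the two adjacent $\beta$-arcs is hit first) still needs the figure-level tracing, as you acknowledge.
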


The sequences referred to by Lemma \ref{lem:tracksequences} are depicted in figure \ref{fig:190307TrackSequences2}.

\begin{figure}
\centering
\labellist \small\hair 2pt
\pinlabel {~} at 197 38
\pinlabel {$\beta^3$} at 205 108
\pinlabel {$\beta^2$} at 5 108
\pinlabel {$G^1$} at 110 -2
\endlabellist
\includegraphics[height=.25\textwidth]{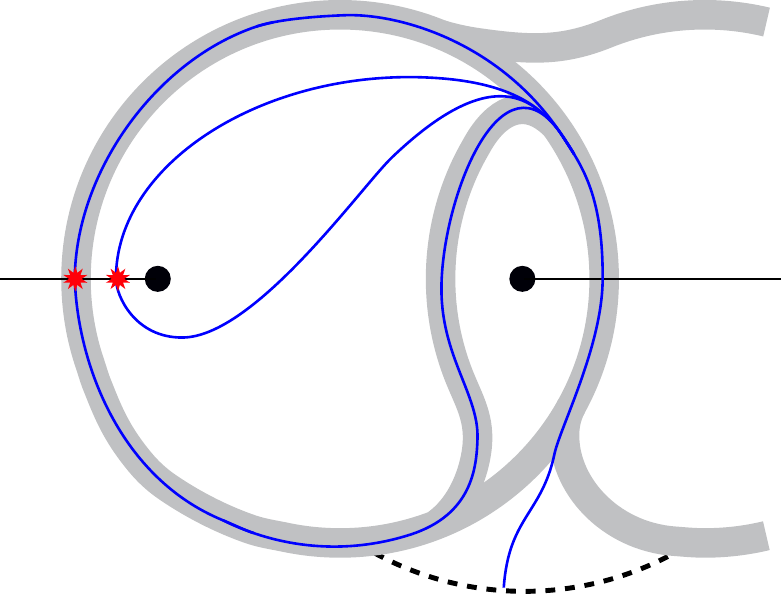}
\caption{The sequence of numbered points along a 1-colored track is \(3,2,\dots,2,3\).}
\label{fig:190228FirstSecond-PenultimateLastPoints_LowerGates_LeftSide}
\end{figure}

\begin{figure}
\centering
\labellist \small\hair 2pt
\pinlabel {$\beta^{j-1}$} at 105 103 % 205 108
\pinlabel {$\beta^{j}$} at 315 103
\pinlabel {$G^i$} at 225 -5
\endlabellist
\includegraphics[height=.25\textwidth]{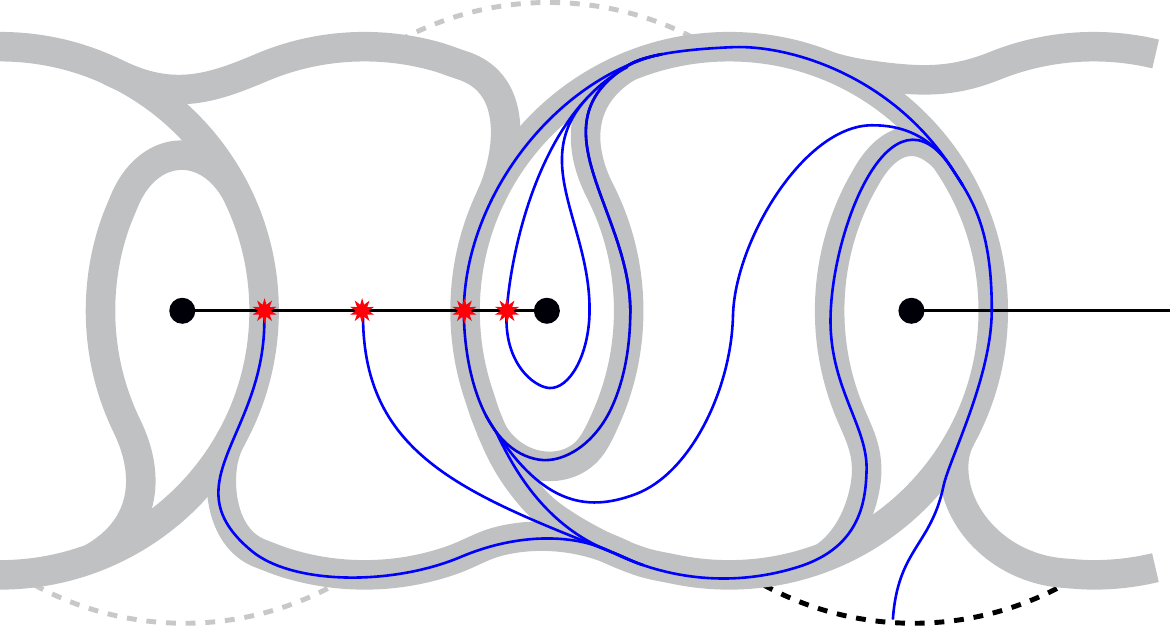}
\caption{For odd \(i\), the sequence of numbered points along an \(i\)-colored track is \(j,j-1,\dots,j-1,j\) (where \(i=2j-5\)).
Here, the case is depicted where $i\geq 3$ (and $j\geq 4$).
}
\label{fig:190228FirstSecond-PenultimateLastPoints_LowerGates}
\end{figure}

\begin{figure}
\centering
\labellist \small\hair 2pt
\pinlabel {~} at 197 38
\pinlabel {$\beta^{j+1}$} at 235 75
\pinlabel {$\beta^{j}$} at 30 75
\pinlabel {$G^{i}$} at 30 185
\endlabellist
\includegraphics[height=.25\textwidth]{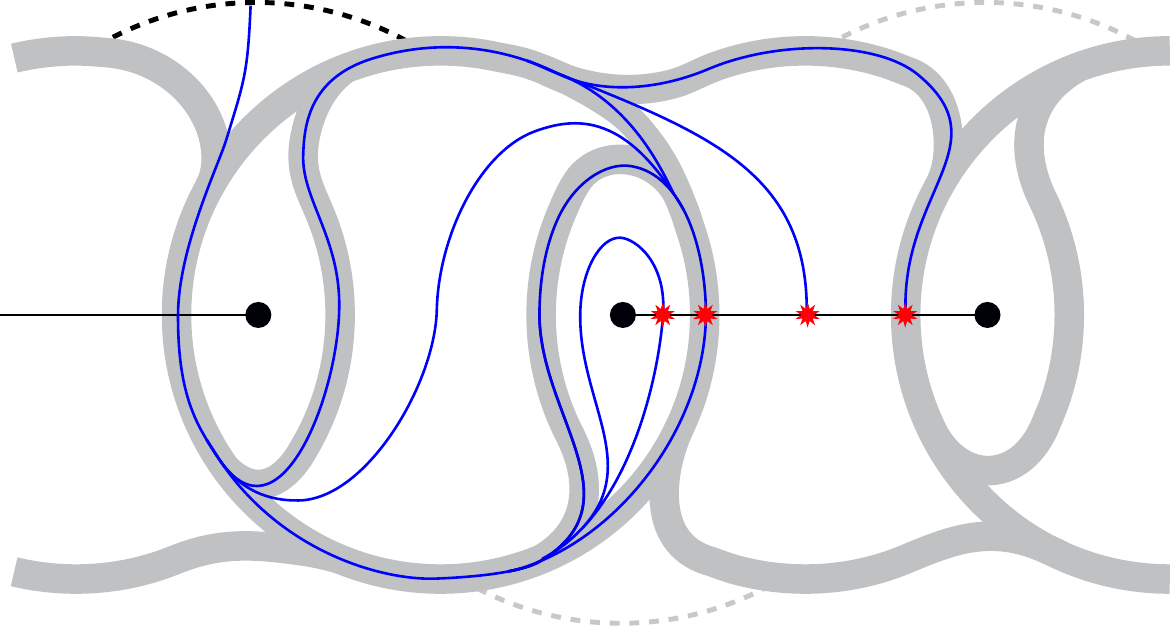}
\caption{For even \(i\), the sequence of numbered points along an \(i\) colored track is \(j,j+1,\dots,j+1,j\) (where \(i=2j-4\)).
This figure depicts the case in which the \(i\) colored gate is not the rightmost upper gate.
(In other words, \(i< h-2\), or equivalently, $j<b-1$.)
}
\label{fig:190228FirstSecond-PenultimateLastPoints_UpperGates}
\end{figure}

\begin{figure}
\centering
\labellist \small\hair 2pt
\pinlabel {$\beta^{b}$} at 265 100
\pinlabel {$\beta^{b-1}$} at 20 100
\pinlabel {$G^{h-2}$} at 30 180
\endlabellist
\includegraphics[height=.25\textwidth]{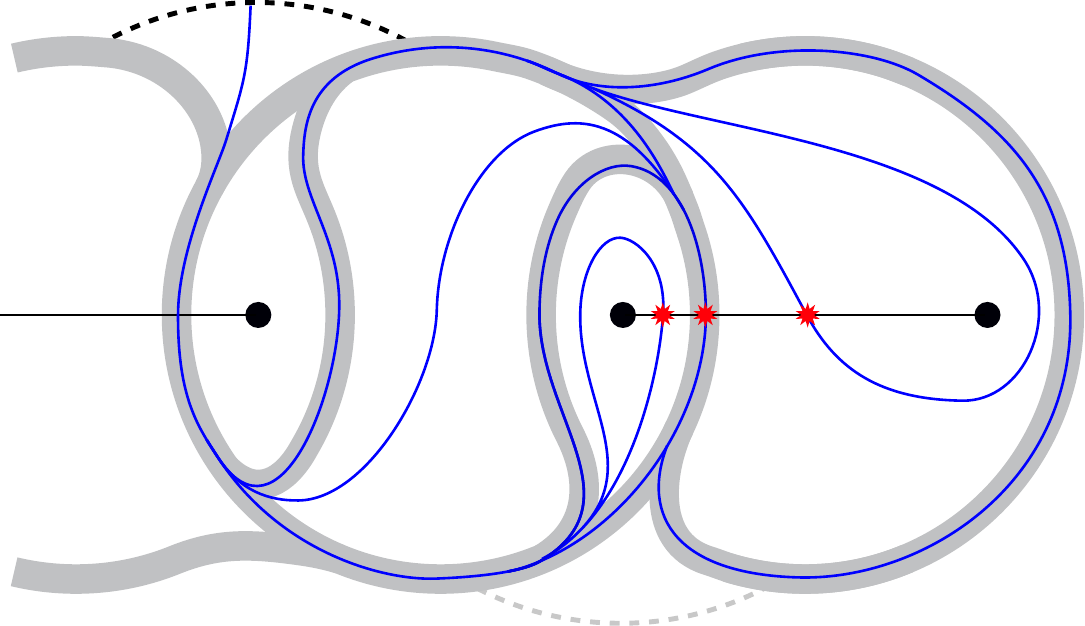}
\caption{Since \(h\) is always even, so is \(h-2\), and here we depict the \(h-2\) colored track.
The sequence of numbered points along such a track is \(b-1,b,\hdots,b,b-1\).}
\label{fig:190228FirstSecond-PenultimateLastPoints_UpperGates_RightSide}
\end{figure}

\begin{proof}
In Figures \ref{fig:190228FirstSecond-PenultimateLastPoints_LowerGates_LeftSide} and \ref{fig:190228FirstSecond-PenultimateLastPoints_LowerGates}, \(i\)-colored tracks are illustrated, where \(i\) is odd.\footnote{Note that when \(i\) is odd, tracks enter from the lower gates, and when \(i\) is even, tracks enter through the upper gates.}
(In Figure \ref{fig:190228FirstSecond-PenultimateLastPoints_LowerGates_LeftSide}, \(i=1\), and in Figure \ref{fig:190228FirstSecond-PenultimateLastPoints_LowerGates}, \(i\geq 3\).)
In each figure, a curve is shown entering the gate, and then every possible choice is depicted at each intersection, demonstrating that in all cases, if the first \(\beta\)-arc intersected is labeled \(\beta^j\), the second must be \(\beta^{j-1}\).
Thus for all \(i\)-colored tracks with \(i\) odd, the sequence of numbered points is \(j, j-1,\dots,j-1,j\).

Similarly, Figures \ref{fig:190228FirstSecond-PenultimateLastPoints_UpperGates} and \ref{fig:190228FirstSecond-PenultimateLastPoints_UpperGates_RightSide} illustrate color \(i\) tracks where \(i\) is even.
(In Figure \ref{fig:190228FirstSecond-PenultimateLastPoints_UpperGates}, \(i\leq h-4\), and in Figure \ref{fig:190228FirstSecond-PenultimateLastPoints_UpperGates_RightSide}, \(i=h-2\).
Recall that the colors range from color \(1\) to color \((h-1)\), and \(h\) is always even.)
The figures show that in all cases, if the first \(\beta\)-arc intersected is \(\beta^j\), then the second must be \(\beta^{j+1}\), so that for all \(i\)-colored tracks with even \(i\), the sequence of numbered points is \(j,j+1,\dots,j+1,j\).

Suppose two tracks are colored \(a\) and \(b\), with \(a\neq b\).
If \(a\) and \(b\) share the same parity, then the tracks are two lower or two upper tracks.
Being different colors, they enter different gates by definition and so they clearly first intersect different \(\beta\)-arcs.
Therefore their sequences of numbered points must differ in the first (and last) number.

On the other hand, suppose \(a\) is odd and \(b\) is even.
Suppose further that each of their sequences' first (and last) points are \(k\) points.
Then the \(a\)-colored track has a second numbered point \(k-1\), and the \(b\)-colored track has a second numbered point \(k+1\), and so their sequences differ in the second (and penultimate) place.
This proves Lemma \ref{lem:tracksequences}.
\end{proof}

We pause to make a few observations from the proof of Lemma \ref{lem:tracksequences}.

\begin{obs}\label{obs:at_least_four_numbered_points_on_each_track}
For each colored track, the corresponding sequence of numbered points contains at least three points.
(The second and penultimate points may coincide.) % That is the case in when b=3 and the right twist region has twist number 2.
% It's probably not true in any other situation.
\end{obs}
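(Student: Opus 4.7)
The plan is to read the observation off directly from the analysis performed in the proof of Lemma \ref{lem:tracksequences}. For any colored track, Figures \ref{fig:190228FirstSecond-PenultimateLastPoints_LowerGates_LeftSide}--\ref{fig:190228FirstSecond-PenultimateLastPoints_UpperGates_RightSide} record the forced behavior: upon entering its gate, the track first crosses \(\beta^j\) for a specific \(j\) determined by the gate's color, and its next crossing must be \(\beta^{j\pm 1}\). A symmetric reading of the track from the other endpoint gives a penultimate crossing on \(\beta^{j\pm 1}\) and a last crossing on \(\beta^j\). Therefore, as soon as we know the track has at least one numbered point, the sequence automatically contains a first (on \(\beta^j\)), a second (on \(\beta^{j\pm 1}\)), a penultimate (on \(\beta^{j\pm 1}\)), and a last (on \(\beta^j\)). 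Even in the degenerate case where the track crosses \(\beta^{j\pm 1}\) only once in between (i.e., the second and penultimate points coincide), the sequence still contains three distinct points \(j, j\pm 1, j\).

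To finish, I would rule out the possibility that a track has zero or one numbered points. If a track \(\tau\) has zero numbered points, then \(\tau\) is an arc in the colored punctured disk whose endpoints lie on the gate and which is disjoint from every \(\beta\)-arc. Since the colored disk has a single puncture, \(\tau\) separates it into two regions, exactly one of which is unpunctured; this unpunctured disk gives an isotopy of \(\partial R\) that removes both endpoints of \(\tau\) from the gate, reducing \(|\partial R \cap \bigsqcup_{i} G^i|\) by two and contradicting the minimal position of \(R\) with respect to the gates. If a track has exactly one numbered point on \(\beta^j\), then the forced-crossings analysis from Lemma \ref{lem:tracksequences} already applies: after the first crossing of \(\beta^j\), the track must traverse \(\beta^{j\pm 1}\) before it can return to the gate, producing a second numbered point and contradicting the assumption.

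The main obstacle is the forced-crossings claim itself, which is where the geometric content lies, since it depends on the precise arrangement of \(\beta\)-arcs inside each colored punctured disk and on the minimal position of \(R\) with respect to both \(\mathcal{D}\) and the gates. Since exactly this analysis is already carried out in the proof of Lemma \ref{lem:tracksequences}, the observation then follows by inspection of Figures \ref{fig:190228FirstSecond-PenultimateLastPoints_LowerGates_LeftSide}--\ref{fig:190228FirstSecond-PenultimateLastPoints_UpperGates_RightSide}, combined with the two short minimal-position arguments above.
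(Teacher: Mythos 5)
Your proposal follows the same route as the paper. The paper does not give an independent proof of this observation; it presents it as a direct consequence of the case analysis carried out in the proof of Lemma \ref{lem:tracksequences}, where the figures establish that every colored track's sequence of numbered points has the form \(j, j\pm 1,\dots, j\pm 1, j\), which visibly has length at least three. Your reading of the observation off those figures is exactly the intended argument.

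You add explicit arguments ruling out sequences of length zero and one, which the paper leaves implicit. Those additions are in the right spirit, but the zero-length case has two routine gaps worth tightening. First, the unpunctured subdisk you find might contain other arcs of \(\partial R\); you should take \(\tau\) to be an innermost such arc, or take an innermost bigon, so the isotopy is well-defined and actually decreases \(|\partial R\cap\bigsqcup_i G^i|\). Second, you assert that the unpunctured piece gives an isotopy reducing gate intersections, but a priori the boundary of that piece could consist of \(\tau\) together with an arc that runs through part of \(\partial B'\) (the ``inner'' edge of the petal), in which case the naive push would move \(\partial R\) into the twice-punctured region containing \(\beta'\), potentially violating \(R\in\mathcal{R}\). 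What saves the argument is that the \(\beta\)-arcs inside each petal separate the gate from the puncture and from \(\partial B'\), so an arc disjoint from all \(\beta\)-arcs with both endpoints on the gate must cut off its unpunctured bigon against a subarc of the gate alone. This fact is what the figures in Lemma \ref{lem:tracksequences} encode, but your write-up should state it rather than only invoke the one-puncture count. With those small repairs the argument is correct.
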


\begin{obs}\label{obs:2_never_starts_sequence}
The first numbered point in each sequence is never a number 2 point.
\end{obs}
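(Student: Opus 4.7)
The plan is to read Observation 5.7 off directly from the case analysis carried out in the proof of Lemma 5.5 (tracksequences). That proof exhibits, for each color $i$, the explicit first numbered point of the sequence along an $i$-colored track, expressed as a function of $i$ (and of $b$ in one exceptional case). So I would not redo any geometry; instead I would simply bound the first numbered point $j$ from below in each of the four cases that appeared in the proof of Lemma 5.5, corresponding to Figures 6--9.

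First I would handle odd $i$. Inspecting the two odd-case figures, the first numbered point is $j = (i+5)/2$: for $i = 1$ this gives $j = 3$ (Figure 6), and for odd $i \geq 3$ this gives $j \geq 4$ (Figure 7). Then I would handle even $i$, which splits into the generic case $i \leq h-4$ and the boundary case $i = h-2$. In the generic case, the first numbered point is $j = (i+4)/2 \geq 3$ since $i \geq 2$ (Figure 8). In the rightmost-gate case $i = h-2$, the first numbered point is $j = b-1$ (Figure 9). In every case one has $j \geq 3$, so in particular $j \neq 2$, which is exactly the statement of Observation 5.7.

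The one small subtlety, and the only thing that could go wrong, is the rightmost-gate case: $j = b - 1$ is only at least $3$ when $b \geq 4$, while the paper allows $b \geq 3$. So I would need to verify that the case $i = h - 2$ is vacuous when $b = 3$. This is automatic: for a square plat link $h = 2b-4$, so $b = 3$ forces $h = 2$, and then the gates are indexed $1, \dots, h-1 = 1$, meaning there is no color $h - 2 = 0$ to worry about. For $b \geq 4$ we do have $b - 1 \geq 3$, so the bound holds. With this single remark the observation is complete, and the whole argument is really just bookkeeping on the formulas already produced in the proof of Lemma 5.5, not a new geometric argument.
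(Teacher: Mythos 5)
Your proposal is correct and matches the paper's (implicit) argument: the paper states Observation~\ref{obs:2_never_starts_sequence} as a direct byproduct of the case analysis in the proof of Lemma~\ref{lem:tracksequences}, and you are simply making that bookkeeping explicit by extracting, from each of the four figures, the formula for the first numbered point $j$ ($j = (i+5)/2$ for odd $i$, $j = (i+4)/2$ for even $i$, both $\geq 3$) and handling the $i = h-2$ boundary case, including the correct observation that this case is vacuous when $b = 3$.
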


\begin{figure}
\centering
\labellist \small\hair 2pt
\pinlabel {~} at 197 38
\pinlabel {$\beta^{3}$} at 205 85
\pinlabel {$\beta^{2}$} at -19 85
\pinlabel {$G^1$} at 99 15
\pinlabel {$G^2$} at 209 198
\endlabellist
\includegraphics[height=.25\textwidth]{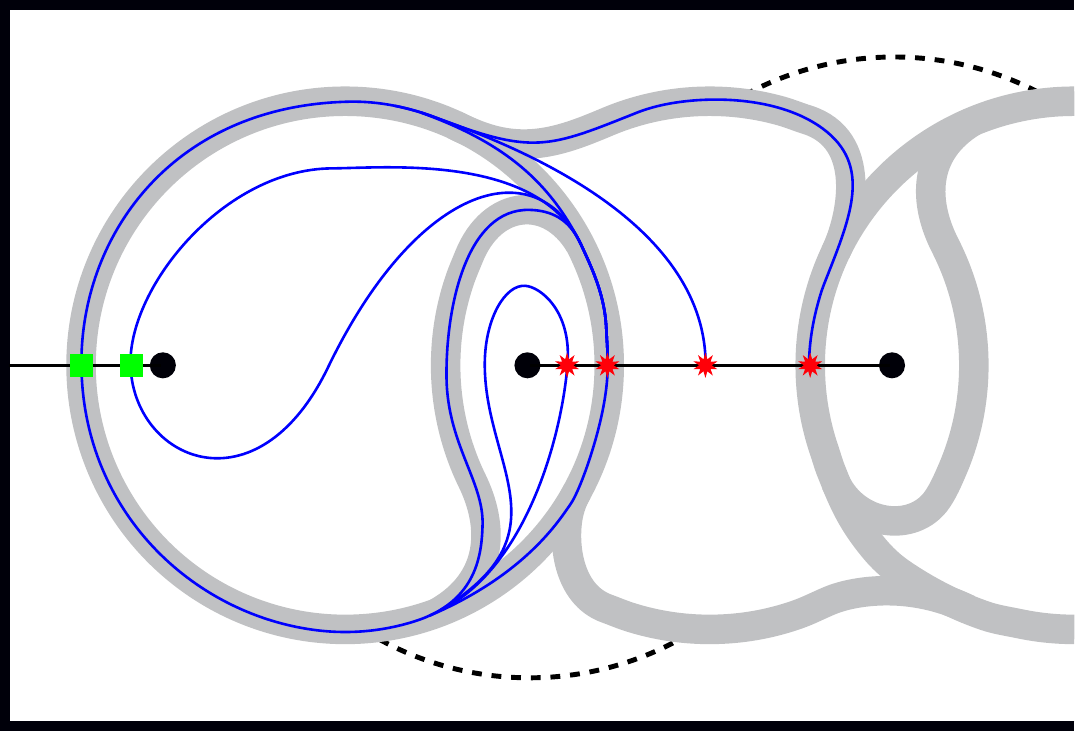}
\caption{After intersecting \(\beta^2\), \(\partial R\) will intersect \(\beta^3\).
}
\label{fig:190308NumberedPointsAlternate_First}
\end{figure}

\begin{figure}
\centering
\labellist \small\hair 2pt
\pinlabel {~} at 197 38
\pinlabel {$\beta^{4}$} at 415 85
\pinlabel {$\beta^{3}$} at 190 85
\pinlabel {$\beta^{2}$} at -20 85
\pinlabel {$G^1$} at 99 15
\pinlabel {$G^2$} at 209 198
\pinlabel {$G^3$} at 309 15
\endlabellist
\includegraphics[height=.25\textwidth]{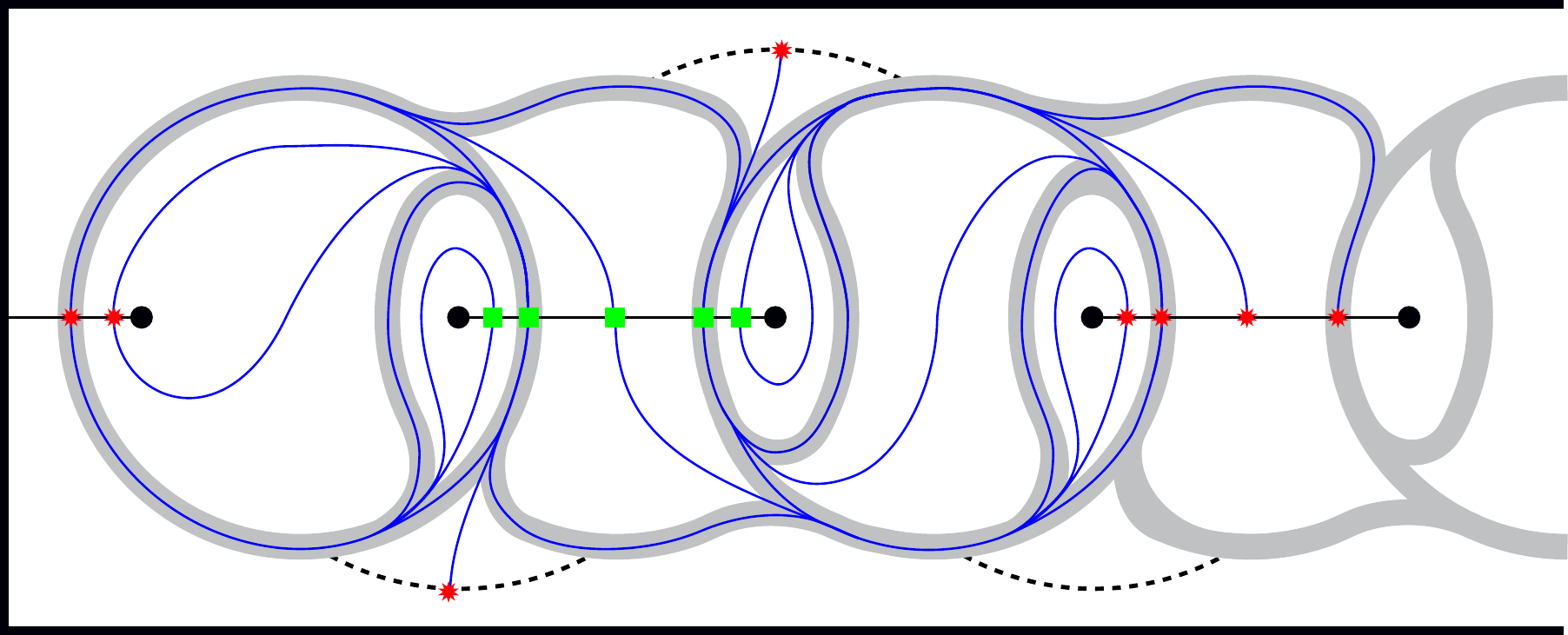}
\caption{After intersecting \(\beta^3\), \(\partial R\) will intersect either a gate or \(\beta^2\) or \(\beta^4\).}
\label{fig:190308NumberedPointsAlternate_Second}
\end{figure}

\begin{figure}
\centering
\labellist \small\hair 2pt
\pinlabel {~} at 197 38
\pinlabel {$\beta^{j}$} at 300 85
\pinlabel {$\beta^{j-1}$} at 110 120
\pinlabel {$\beta^{j+1}$} at 525 85
\endlabellist
\includegraphics[height=.25\textwidth]{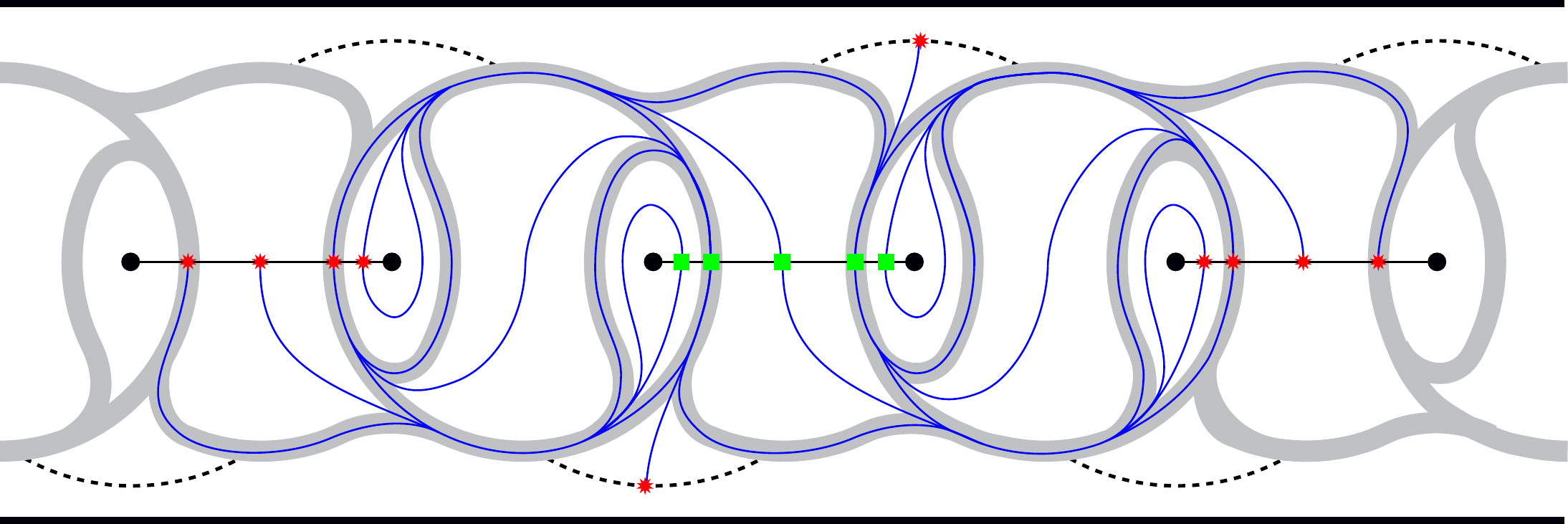}
\caption{After intersecting \(\beta^j\) for \(3<j<b-1\), \(\partial R\) will intersect either a gate or \(\beta^{j-1}\) or \(\beta^{j+1}\).}
\label{fig:190308NumberedPointsAlternate_Middle}
\end{figure}

\begin{figure}
\centering
\labellist \small\hair 2pt
\pinlabel {~} at 197 38
\pinlabel {$\beta^{b-1}$} at 302 85
\pinlabel {$\beta^{b-2}$} at 110 120
\pinlabel {$\beta^{b}$} at 549 89
\pinlabel {$G^{h-1}$} at 421 15
\pinlabel {$G^{h-2}$} at 319 195
\pinlabel {$G^{h-3}$} at 311 15
\pinlabel {$G^{h-4}$} at 219 195
\pinlabel {$G^{h-5}$} at 104 15
\endlabellist
\includegraphics[height=.33\textwidth]{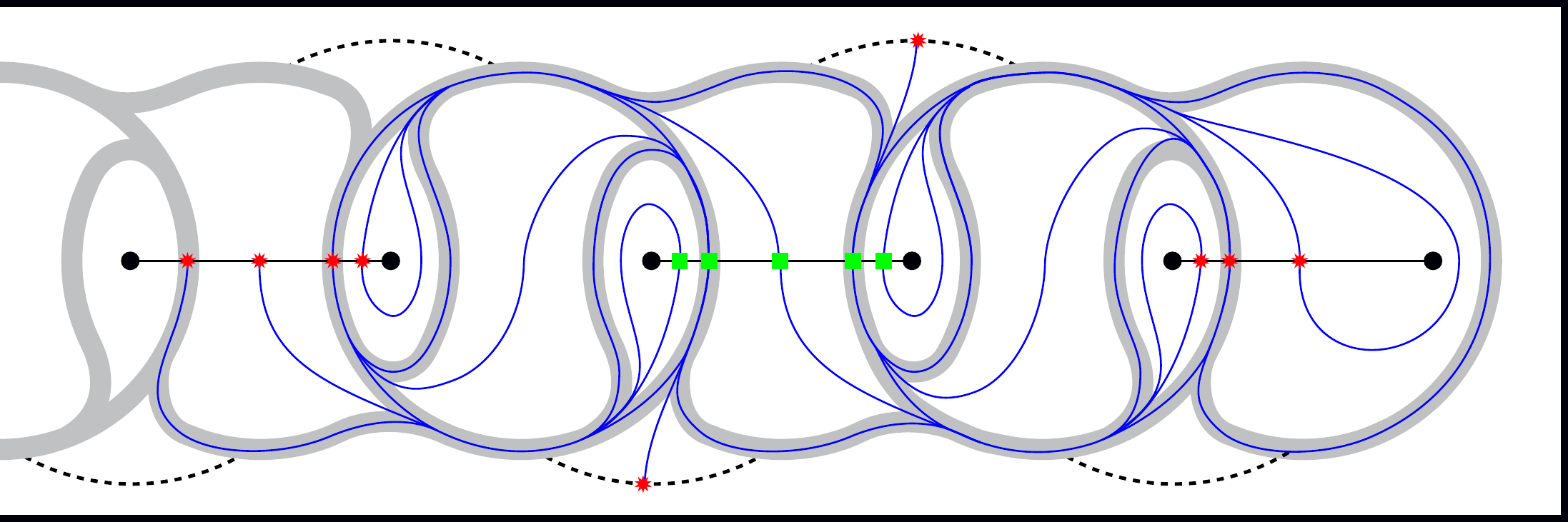}
\caption{After intersecting \(\beta^{b-1}\), \(\partial R\) will intersect either a gate or \(\beta^{b-2}\) or \(\beta^b\).
}
\label{fig:190308NumberedPointsAlternate_Second-to-last}
\end{figure}

\begin{figure}
\centering
\labellist \small\hair 2pt
\pinlabel {~} at 197 38
\pinlabel {$\beta^{b-1}$} at 110 120
\pinlabel {$\beta^{b}$} at 348 92
\pinlabel {$G^{h-1}$} at 315 15
\pinlabel {$G^{h-2}$} at 219 195
\pinlabel {$G^{h-3}$} at 111 15
\endlabellist
\includegraphics[height=.25\textwidth]{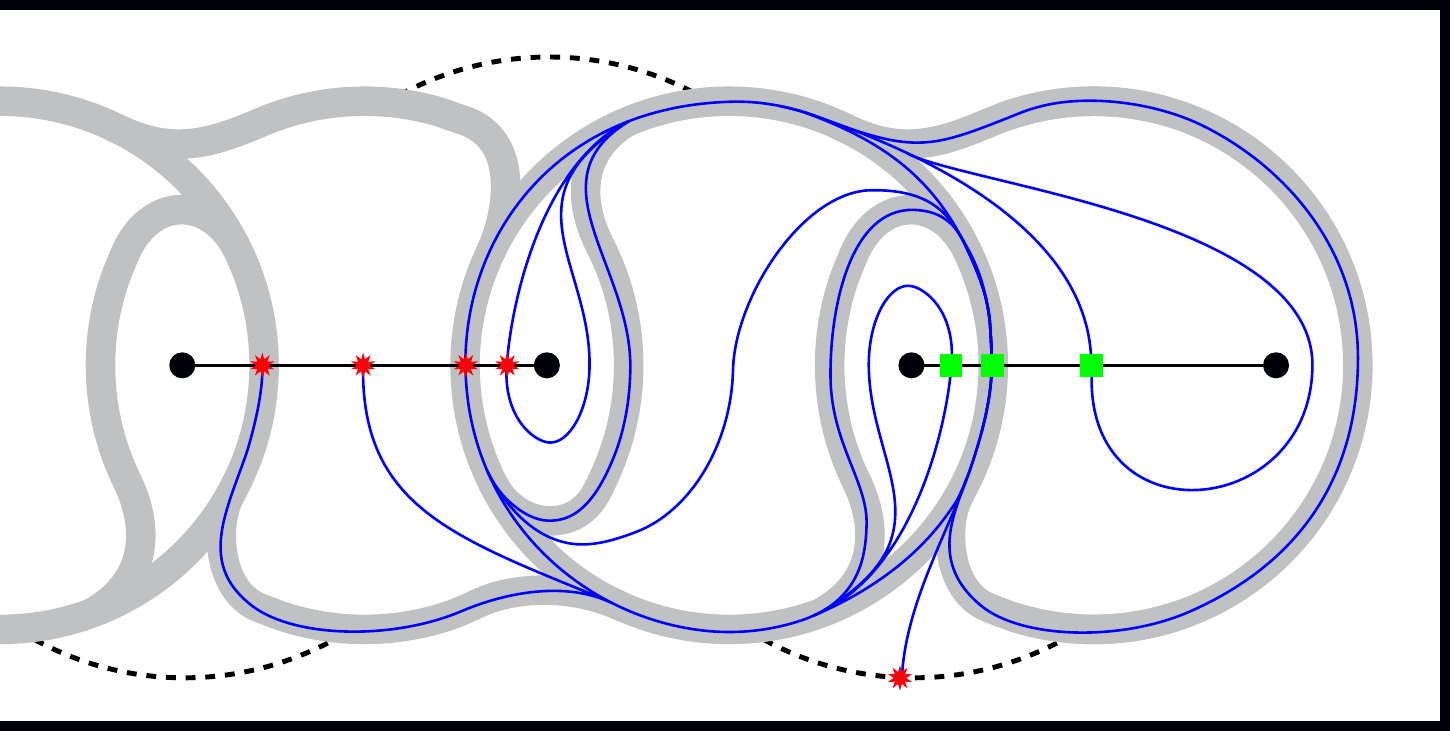}
\caption{After intersecting \(\beta^b\), \(\partial R\) will intersect either the \((h-1)\)-colored gate or \(\beta^{b-1}\).
}
\label{fig:190308NumberedPointsAlternate_Last}
\end{figure}

\begin{lem}\label{lem:alternating}
The numbered points along each track alternate between even and odd numbers.
\end{lem}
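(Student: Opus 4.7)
My plan is to reduce the lemma to the sharper claim that consecutive numbered points on any track have labels differing by exactly $1$; the alternation of parity is then immediate since $j$ and $j \pm 1$ always have opposite parity.

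Fix a track $T$ and list its numbered points $p_1, p_2, \ldots, p_m$ in the order they appear along $T$, with $p_k$ lying on $\beta^{j_k}$. Between $p_k$ and $p_{k+1}$, the subarc of $\partial R$ is contained in the colored disk of $T$ and meets $\bigcup_i \beta^i$ only at its two endpoints; it is moreover disjoint from $\partial B'$ since $R \in \mathcal{R}$. The labyrinth structure recorded in Figure \ref{fig:190131GeneralUnlinkDiagram}, together with the crossing type fixed by Corollary \ref{cor:lab_gate_locations}, pins down the local arrangement of $\partial B'$ strands in a neighborhood of each $\beta^j$ inside \textbf{Lab}. My goal is to exploit this local data to conclude that, once $\partial R$ exits $\beta^{j_k}$, the only available continuation before leaving the colored disk is a crossing with $\beta^{j_k - 1}$ or $\beta^{j_k + 1}$.

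I would carry this out by the case analysis already organized in Figures \ref{fig:190308NumberedPointsAlternate_First}--\ref{fig:190308NumberedPointsAlternate_Last}, splitting into $j_k = 2$, $j_k = 3$, $3 < j_k < b-1$, $j_k = b-1$, and $j_k = b$. In each case I would trace every possible continuation of $\partial R$ inside the colored disk and confirm, from the local picture, that the only admissible next crossing is $\beta^{j_k \pm 1}$ (the only alternative being exit through a gate, which terminates the sequence). Iterating this rule through $j_1, \ldots, j_m$ then gives the required alternation.

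The main obstacle is the careful treatment of the boundary cases $j_k = 2$ and $j_k = b$, where only one of the two symmetric options $\beta^{j_k \pm 1}$ actually exists inside the labyrinth and where the outer gates impose extra constraints. A secondary concern is ruling out degenerate continuations in which $\partial R$ doubles back to hit the same $\beta^{j_k}$ before crossing any other $\beta$-arc; this is excluded by the minimality of $|R \cap \mathcal{D}|$, the minimality of $R$ with respect to the gates, and the fact that $R \in \mathcal{R}$ is disjoint from $\partial B'$, so no stray intersection with the boundary of the colored disk can produce such a loop.
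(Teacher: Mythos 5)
Your proposal is correct and takes essentially the same route as the paper: the paper's proof simply cites Figures \ref{fig:190308NumberedPointsAlternate_First}--\ref{fig:190308NumberedPointsAlternate_Last}, which record exactly the case analysis you describe (after crossing $\beta^j$, the only continuations inside the colored disk are $\beta^{j\pm 1}$ or a gate), and the parity alternation then follows because consecutive numbered points differ by $\pm 1$. You have merely made explicit the reduction and the degenerate-case hygiene that the paper leaves implicit.
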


\begin{proof}
All possible cases are illustrated in Figures \ref{fig:190308NumberedPointsAlternate_First}, \ref{fig:190308NumberedPointsAlternate_Second}, \ref{fig:190308NumberedPointsAlternate_Middle}, \ref{fig:190308NumberedPointsAlternate_Second-to-last},  and \ref{fig:190308NumberedPointsAlternate_Last}, and in every case, the statement holds true.
\end{proof}

\begin{lem}\label{lem:not_same_track}
Assume \(\mathcal{R}\) is nonempty.
No numbered arc can have endpoints which lie on the same track.
\end{lem}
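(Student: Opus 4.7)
The plan is to argue by contradiction with the minimality of $|R \cap \mathcal{D}|$ on $\mathcal{R}$. Suppose some number $j$ arc $\alpha$ has both endpoints $p_1, p_2$ on a single track $T$. I would choose $\alpha$ so that it is outermost in $D^j$, cobounding with a subarc $\beta^M \subset \beta^j$ a subdisk $E \subset D^j$ whose interior meets no other arcs of $R \cap D^j$; and among such choices, innermost in $R$, meaning that the subdisk $R_0 \subset R$ bounded by $\alpha$ and the subarc $\tau \subset T$ between $p_1$ and $p_2$ contains no other numbered arc with both endpoints on a common track.

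From here I would form the disk $Q := R_0 \cup_{\alpha} E$ in $M_+$ (smoothed along $\alpha$ and pushed slightly off $D^j$), whose boundary $\partial Q = \tau \cup \beta^M$ lies in $F$. The outermost choice of $\alpha$ gives $|Q \cap \mathcal{D}| \leq |R \cap \mathcal{D}| - 1$. Essentiality of $\partial Q$ in $F \setminus L$ would follow from Lemma \ref{lem:alternating}: numbered points along $T$ alternate in parity, so between the two number $j$ endpoints $p_1, p_2$ lies an odd number of intermediate numbered points, forcing $\partial Q$ to have odd mod-$2$ intersection with $\bigsqcup_{j' \ne j}\beta^{j'}$ and hence to separate the endpoints of some $\beta^{j'}$, giving two distinct punctures of $L$ on opposite sides of the loop. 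The additional structure of $\textbf{Lab}$ (Proposition \ref{prop:flower}) would then be used to rule out $\partial Q$ being parallel to a single puncture.

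The main obstacle is that Lemma \ref{lem:two_points_same_lane} forces $p_1$ and $p_2$ into distinct lanes of $\beta^j$, so $\beta^M$ must cross $\beta'$; consequently $\partial Q \cap \beta' \neq \emptyset$ and $Q$ is not itself in $\mathcal{R}$, so the construction above does not immediately contradict the minimality of $R$. To push the argument through, I would surger $Q$ along its intersections with $B'$, using innermost-disk arguments on $B'$ to excise the $\beta'$-crossings, and extract a component $Q' \subset M_+$ that is a red compressing disk with $\partial Q' \cap \beta' = \emptyset$ and $|Q' \cap \mathcal{D}| < |R \cap \mathcal{D}|$. Arguing that $Q'$ cannot be isotopic to either blue disk (given its strictly smaller intersection count than the minimum over $\mathcal{R}$) would place $Q' \in \mathcal{R}$, yielding the desired contradiction. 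This surgery step, which parallels the analogous argument in \cite{rodman2018infinite}, is the delicate part of the proof.
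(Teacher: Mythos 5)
Your approach diverges substantially from the paper's, and it has a fatal gap at its central step. The paper (following Lemma~6.3 of \cite{rodman2018infinite}, with Lemma~\ref{lem:alternating} replacing the ``every other point is a number~3 point'' fact) uses a short, direct parity argument with no disk construction at all. Suppose some arc of $\mathcal{D}\cap R$ has both endpoints on a track $T$; let $R_0\subset R$ be the subdisk it cuts off whose boundary arc $\tau$ on $\partial R$ lies in $T$. Every arc of $\mathcal{D}\cap R$ contained in $R_0$ has both endpoints on $\tau$, so one may take one that is \emph{outermost} in $R_0$: it cuts off a subdisk meeting no other arcs of $\mathcal{D}\cap R$, hence its two endpoints are \emph{consecutive} numbered points on $T$. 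By Lemma~\ref{lem:alternating}, consecutive numbered points have opposite parity, hence different numbers; but the two endpoints of a single number-$j'$ arc are both number-$j'$ points. Contradiction, done. No new compressing disk, no appeal to minimality of $|R\cap\mathcal{D}|$, no essentiality check.

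By contrast, your construction of $Q$ and the proposed repair break down at the $B'$-surgery step, and not merely because it is ``delicate.'' $Q$ lies in $M_+$ and $B'$ lies in $M_-$; they are on opposite sides of $F$ and intersect only in the finite point set $\partial Q\cap\partial B'$, so there is no surface surgery of $Q$ along $B'$ to perform. Nor can you isotope $\partial Q$ off $\beta'$ within $F$: you correctly observe that Lemma~\ref{lem:two_points_same_lane} forces $p_1,p_2$ into different lanes of $\beta^j$, so the subarc $\beta^M$ \emph{essentially} crosses $\beta'$ and the crossings are not removable. Thus $Q$ can never be pushed into $\mathcal{R}$, and the minimality of $R$ over $\mathcal{R}$ gives you nothing. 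There are also secondary issues: it is not clear you can choose $\alpha$ simultaneously outermost in $D^j$, innermost in $R$, and with both endpoints on a single track (an arc with both endpoints on one track need not be outermost in its bridge disk); and the essentiality of $\partial Q$ in $F\setminus L$ is asserted rather than proved. The parity observation you invoke for essentiality is precisely the right tool, but it yields the contradiction directly, without ever building $Q$.
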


Lemma \ref{lem:not_same_track} corresponds to Lemma 6.3 from \cite{rodman2018infinite}, where the proof began by showing that along every track, every other numbered point is a number 3 point.
That is not the case in our more general setting, but Lemma \ref{lem:alternating} is a sufficient replacement for that fact, and so we need not reproduce the proof here.

\begin{figure}
\centering
\labellist \small\hair 2pt
\pinlabel {Case 1} [r] at 437 82
\pinlabel {Case 2} [l] at 220 44
\pinlabel {Case 3} [l] at 73 88
\pinlabel {Case 4} [l] at 178 163
\pinlabel {Case 5} at 352 163
\endlabellist
\includegraphics[width=.5\textwidth]{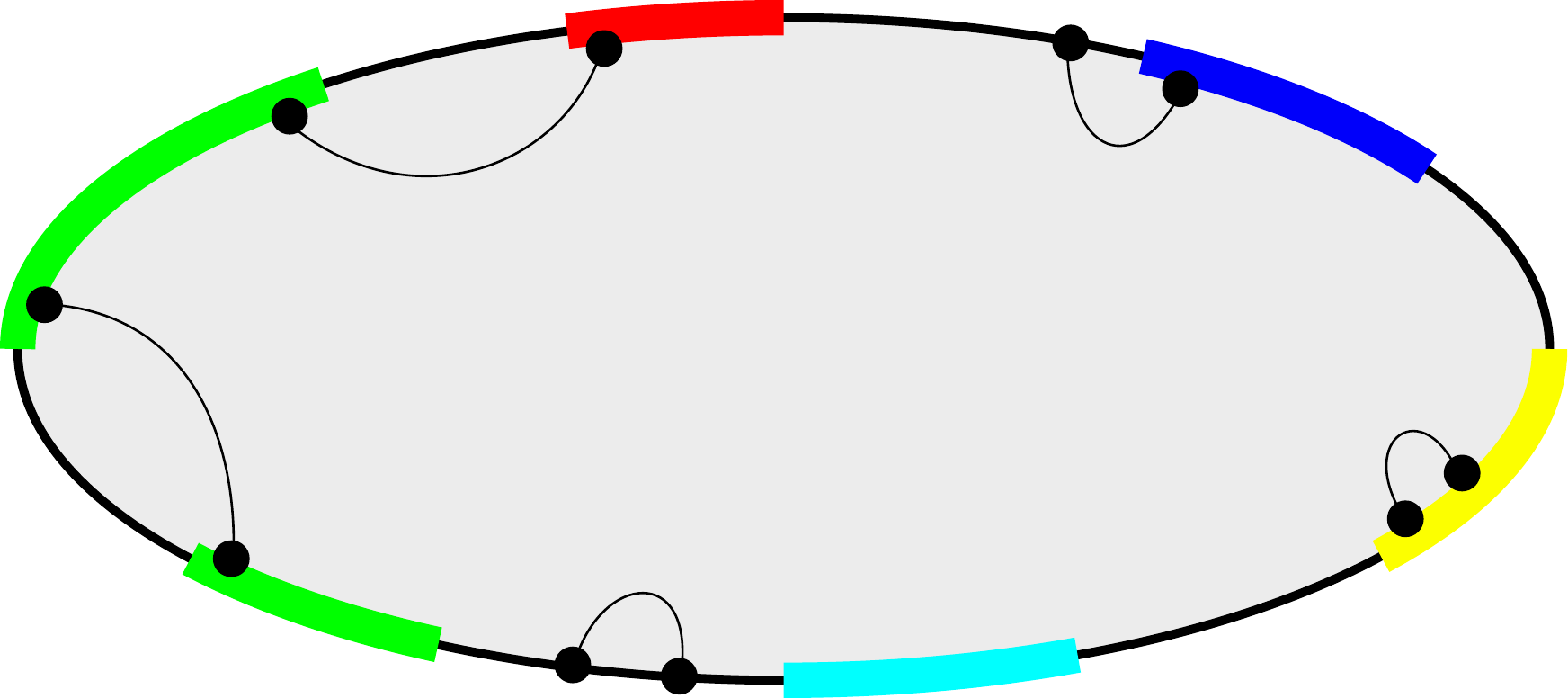}
\caption{Here we show the five possible cases for an outermost arc \(\lambda\) of \(R\cap\mathcal{D}\).
The large gray disk is \(R\), and the thin subarcs properly contained in \(R\) represent possible locations of \(\lambda\).
The bold subarcs around \(\partial R\) represent tracks of various colors.
The points of \(\lambda\cap\partial R\) are numbered points.
}
\label{fig:190418_Intersections_with_R_colored}
\end{figure}

\begin{thm}\label{thm:mathcalR_is_empty}
\(\mathcal{R}\) is the empty set.
\end{thm}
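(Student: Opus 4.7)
The plan is to argue by contradiction. Assume $\mathcal{R}$ is nonempty and let $R\in\mathcal{R}$ be chosen as in the setup, minimizing $|R\cap\mathcal{D}|$ and in minimal position with respect to $\mathcal{D}$ and the gates. First I would dispose of the degenerate case $R\cap\mathcal{D}=\emptyset$: repeating the argument from Case 2 of Lemma \ref{lem:intersect_gamma_arc}, if $R$ avoids $\mathcal{D}$ then $\partial R$ separates $\partial\beta^1$ from $\xi=\beta^2\cup\gamma^2\cup\cdots\cup\gamma^{b-1}\cup\beta^b$ in $F$, making $R$ isotopic to $B$; since $B$ is blue, this contradicts the choice of $R$.

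Hence $R\cap\mathcal{D}\neq\emptyset$, so I can select an outermost arc $\lambda$ of $R\cap\mathcal{D}$ in $R$, cobounding together with a subarc $\mu\subset\partial R$ an outermost subdisk $D_\lambda\subset R$ whose interior is disjoint from $\mathcal{D}$. Let $D^k$ be the bridge disk containing $\lambda$, so both endpoints of $\lambda$ are numbered points on $\beta^k$. By Lemma \ref{lem:most_numbered_points_lie_in_tracks} each endpoint of $\lambda$ either lies on a colored track or belongs to a number 2 arc whose other endpoint lies on a track. The combinatorial position of $D_\lambda$ relative to the tracks that the interior of $\mu$ crosses then falls into exactly the five configurations exhibited in Figure \ref{fig:190418_Intersections_with_R_colored}.

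In each of the five cases I would derive a contradiction as follows. Case 1 (both endpoints of $\lambda$ on the same track) is immediately ruled out by Lemma \ref{lem:not_same_track}. In the remaining cases, the outermost disk $D_\lambda$ together with the subdisk of $D^k$ cut off by $\lambda$ yields either (i) an isotopy of $R$ across a $3$-ball in $M_+$ that strictly reduces $|R\cap\mathcal{D}|$ while keeping the resulting disk red and disjoint from $\beta'$, contradicting the minimality of $|R\cap\mathcal{D}|$ within $\mathcal{R}$; or (ii) a forced crossing pattern along $\mu$ that violates one of the structural constraints on the tracks. Specifically, Lemma \ref{lem:tracksequences} and Lemma \ref{lem:alternating} pin down which numbered points can appear consecutively on $\mu$ between two tracks; combined with Corollary \ref{cor:lab_gate_locations} and Proposition \ref{prop:flower}, which describe how $\mu$ may pass through the gates, the only admissible configurations are those that also admit the reducing isotopy.

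The main obstacle I anticipate is case (ii) for the configurations (Cases 4 and 5) in which $\mu$ traverses more than one gate, since there it is not obvious that the proposed boundary compression keeps the new disk inside $\mathcal{R}$ rather than dragging $\partial R$ across $\beta'$. Handling this requires Lemma \ref{lem:two_points_same_lane} to ensure that the pair of numbered points connected by $\lambda$ lie in distinct lanes of $\beta^k$, so the replacement arc on $\partial R$ can be chosen on the side of $\beta'$ where $R$ already sits. Once this is checked, the isotopy genuinely reduces $|R\cap\mathcal{D}|$ within $\mathcal{R}$, contradicting the minimality assumption. Eliminating all five cases leaves no possible outermost arc $\lambda$, and therefore no $R\in\mathcal{R}$, so $\mathcal{R}=\emptyset$.
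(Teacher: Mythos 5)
Your outline matches the paper's skeleton — choose an outermost arc $\lambda$ of $R\cap\mathcal{D}$, partition into the five configurations of Figure \ref{fig:190418_Intersections_with_R_colored}, and rule out Case 1 with Lemma \ref{lem:not_same_track} — but after Case 1 the proposal stops being a proof and becomes an unrealized plan, and the plan mislocates where the difficulty actually lies. Cases 2 and 3 are dispatched in the paper not by a reducing boundary-compression but directly by Lemma \ref{lem:two_points_same_lane}: in both configurations the endpoints of $\lambda$ land in the same lane, and that lemma says two such points must be endpoints of \emph{different} components of $R\cap D^i$, contradicting that they bound a single arc $\lambda$. No isotopy across a $3$-ball is invoked or needed. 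Case 5 is also handled by a short combinatorial observation, not by the machinery you propose: an off-track endpoint is a number $2$ point, hence $\lambda$ is a number $2$ arc, hence the on-track endpoint $P_T$ is also a number $2$ point; outermostness forces $P_T$ to be the \emph{first} numbered point in its track's sequence, contradicting Observation \ref{obs:2_never_starts_sequence}. Your suggestion that Case 5 is one of the hard cases requiring Lemma \ref{lem:two_points_same_lane} to control a replacement arc is off target.

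The genuine gap is Case 4, and the proposal does not contain the idea that closes it. Ruling out Case 4 for a single outermost arc is impossible by the lane or number-$2$ arguments; instead the paper must bring in a \emph{second-outermost} arc. One first uses Observation \ref{obs:at_least_four_numbered_points_on_each_track} and Lemma \ref{lem:alternating} to show that non-outermost numbered arcs exist (an interior point $Q$ of a track sequence cannot be an endpoint of an outermost arc because its neighbors in the sequence have different parity), hence second-outermost arcs exist. Taking such a $\zeta$, the single arc $\lambda$ it cuts off must itself fall into Case 4; then the endpoints of $\lambda$ are the outermost points of two different-colored tracks and the endpoints of $\zeta$ are those tracks' second-outermost points. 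Since each of $\lambda$ and $\zeta$ has endpoints carrying the same bridge-disk number, Lemma \ref{lem:tracksequences} forces the two tracks to have the same color, a contradiction. Your write-up gestures at ``structural constraints'' via Lemmas \ref{lem:tracksequences} and \ref{lem:alternating} but never articulates this two-arc comparison, and without it Case 4 does not fall. Also, the fallback you lean on throughout — an outermost compression that ``strictly reduces $|R\cap\mathcal{D}|$ while keeping the resulting disk red and disjoint from $\beta'$'' — is not established anywhere and is not how any of the five cases is actually eliminated, so it cannot be used as a catch-all.
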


\begin{proof}
Assume for the sake of contradiction that \(R\in\mathcal{R}\).
Let \(\lambda\) be an arc of \(\mathcal{D}\cap R\) which is outermost in \(R\).
The possibilities for the locations of the endpoints of \(\lambda\) are partitioned into the following five cases (illustrated in Figure \ref{fig:190418_Intersections_with_R_colored}).

\begin{description}
	\item[Case 1:] \(\lambda\) connects two points in the same track.
	\item[Case 2:] \(\lambda\) connects two points not on any track.
	\item[Case 3:] \(\lambda\) connects points which are adjacent outermost points of adjacent same-colored tracks.
	\item[Case 4:] \(\lambda\) connects points which are adjacent outermost points of adjacent different-colored tracks.
	\item[Case 5:] \(\lambda\) connects an off-track number 2 point with a point on a track.
\end{description}

Cases 1, 2, 3, and 5 are easily ruled out.
Lemma \ref{lem:not_same_track} rules out Case 1.
In both Cases 2 and 3, the endpoints of \(\lambda\) would lie in the same lane, so 
Lemma \ref{lem:two_points_same_lane} rules these cases out.

In Case 5, exactly one endpoint of \(\lambda\) lies on a track.
Call this track \(T\), and let \(P_{T}\) and \(P'\) denote the endpoints of \(\lambda\) on \(T\) and off of \(T\), respectively.
Since all points off of tracks are number 2 points, \(P'\) is a number 2 point, which implies \(\lambda\) is a number 2 arc, and thus \(P_T\) is a number 2 point.
Since \(\lambda\) is outermost, \(P_T\) must be the first numbered point in the sequence along \(T\).
But recall Observation \ref{obs:2_never_starts_sequence}: The first numbered point in each sequence of numbered points is never a number 2 point.
This contradiction rules out Case 5.

\begin{figure}
\centering
\labellist \small\hair 2pt
\pinlabel {\(\zeta\)} at 238 111
\pinlabel {\(\lambda\)} at 217 65
\pinlabel {\(n\)} at 250 19
\pinlabel {\(n\pm 1\)} at 318 38
\pinlabel {\(n\)} at 144 31
\pinlabel {\(n\pm 1\)} at 90 58
\endlabellist
\includegraphics[width=.55\textwidth]{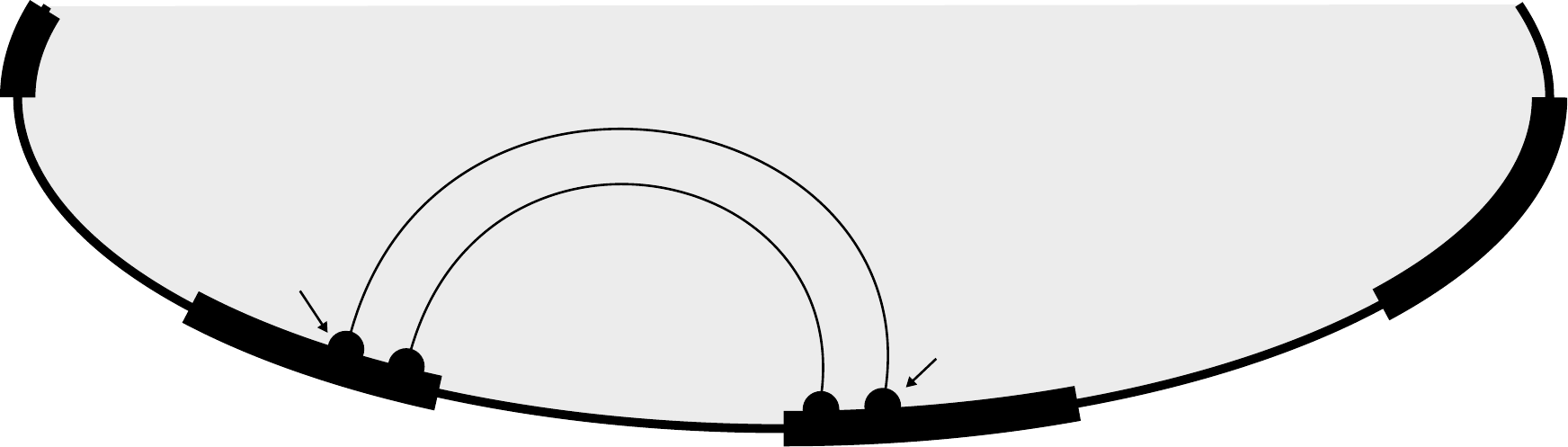}
\caption{The gray disk is \(R\), and the bold subarcs around \(\partial R\) are variously colored tracks. 
This figure illustrates that Case 4 would imply that the tracks on which the endpoints of \(\lambda\) lie are simultaneously different colors and the same color, a contradiction.}
\label{fig:190424_Outermost_SecondOutermost}
\end{figure}

The only remaining case is Case 4.
We will need to establish the existence of second-outermost arcs of intersection of \(\mathcal{D}\) and \(R\).
By Observation \ref{obs:at_least_four_numbered_points_on_each_track}, given the sequence of numbered points along any track, non-outermost points exist in the sequence.
Let \(Q\) be a number \(j\) point which is not the first or last point in a sequence of numbered points along a track.
\(Q\) is by definition the endpoint of a number \(j\) arc \(\xi\). 
By Lemma \ref{lem:alternating}, the numbered points adjacent to \(Q\) in the sequence are not number \(j\) points, and so neither of them can be an endpoint of \(\xi\), which shows that \(\xi\) is not outermost.
This establishes the existence of non-outermost numbered arcs, from which it follows that second-outermost numbered arcs exist.

Let \(\zeta\) be an arc of \(\mathcal{D}\cap R\) which is a second-outermost arc in \(R\).
By definition, \(\zeta\) cuts off a disk of \(R\) containing a single arc of \(\mathcal{D}\cap R\), which is therefore an outermost arc.
Call this outermost arc \(\lambda\).

The arc \(\lambda\) must fit into Case 4, as we have determined all the other cases are impossible.
That is, the endpoints of \(\lambda\) must be points which are adjacent outermost points of adjacent different-colored tracks.
Then the endpoints of \(\zeta\) must be the second-outermost points of these two different-colored tracks.
Since the two endpoints of \(\lambda\) must have the same number as each other, and the two endpoints of \(\zeta\) must have the same number as each other, Lemma \ref{lem:tracksequences} implies that these two different colored tracks are the same color, a contradiction.
Therefore, all five cases lead to contradictions, completing the proof that \(R\) cannot exist.
\end{proof}

\begin{thm}\label{thm:mainmaintheorem}
The canonical bridge sphere for \(L\) is critical.
\end{thm}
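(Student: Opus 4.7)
The plan is to verify the two conditions defining a critical surface for the red/blue partition fixed in Section~\ref{sec:setting}: (i) disjoint same-colored pairs exist on opposite sides of $F$, and (ii) every red disk meets every blue disk on the opposite side.

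First I would dispatch (i). For the blue pair, $B$ and $B'$ themselves are disjoint: $\partial B$ lies in the regular neighborhood of $\beta^1 \cup \gamma^1$, which is precisely the complement of Lab in $F$, while $\partial B' \subset \text{Lab}$; since $B \subset M_+$ and $B' \subset M_-$, this gives $B \cap B' = \emptyset$. For a disjoint red pair, I would take $R_+$ to be the frontier of a regular neighborhood of $D^b$ in $M_+$ and $R_-$ the frontier of a regular neighborhood of $D''$ in $M_-$. Neither is isotopic to $B$ or $B'$ (they either lie on the wrong side of $F$ or encircle a different pair of punctures), so both are red. Their boundary loops in $F$ encircle disjoint pairs of punctures and can be chosen disjoint.

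Next I would establish half of (ii). If $R$ is a red disk above $F$ with $R \cap B' = \emptyset$, then $\partial R \cap \partial B' = \emptyset$. Because $\partial B'$ bounds a regular neighborhood of $\beta'$ in $F$ and $\partial R$ is essential but not parallel to $\partial B'$ (as $R$ is red and therefore not isotopic to $B'$), a routine innermost argument forces $\partial R$ to lie outside this neighborhood, in particular disjoint from $\beta'$. This places $R \in \mathcal{R}$, contradicting Theorem~\ref{thm:mathcalR_is_empty}.

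The main obstacle is the reverse direction of (ii): every red disk below $F$ must meet $B$. The argument is structurally symmetric to the one already carried out: isotoping $F$ from level $h$ back down to level $1$ produces a labyrinth for $\partial B$ fully analogous to that for $\partial B'$, and the entire chain of reasoning in Sections~3--5 culminating in Theorem~\ref{thm:mathcalR_is_empty} applies verbatim with the roles of $M_+$, $M_-$ and of $B$, $B'$ interchanged. The underlying symmetry is the $180^\circ$ rotation of $L$ about the center of its plat diagram, which is an ambient symmetry because $h = 2b - 4$ is even and the twist signs alternate by row; this involution swaps $M_+$ with $M_-$ and carries $D^1$ to $D'$, hence $B$ to $B'$, and preserves the red/blue partition. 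The mirror conclusion rules out any red disk below $F$ disjoint from $\beta^1$, so every such disk meets $B$. Combining both directions of (ii) with (i) verifies the critical-surface definition and completes the proof.
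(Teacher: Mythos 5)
Your overall strategy matches the paper's exactly: verify condition (1) with the explicit pairs $(B,B')$ and the frontiers of regular neighborhoods of $D^b$ and $D''$, and derive condition (2) from Theorem~\ref{thm:mathcalR_is_empty} together with a symmetric argument for the opposite side. However, the ``underlying symmetry'' you invoke is not correct as stated. The $180^\circ$ rotation of the plat diagram about its center is \emph{not} in general an ambient symmetry of $L$: while $h=2b-4$ being even and the alternation of twist signs do ensure that row $i$ maps to a row $h-i$ of the same sign, nothing forces the twist \emph{magnitudes} in row $i$ to agree with those in row $h-i$, so the rotation typically carries $L$ to a different link. What actually does the work (and what the first half of your paragraph already says) is that the entire chain of reasoning in Sections 3--5 is insensitive to the specific twist magnitudes; one can re-run the construction of the labyrinth, gates, and colored tracks starting from level $1$ with $D'$ playing the role of $D^1$ and tracking $\partial B$ instead of $\partial B'$. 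This is a symmetry of the \emph{argument}, not of the link, and your proof should say so rather than asserting an ambient involution that need not exist.

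A smaller point: your stated reason that $\partial R$ cannot be parallel to $\partial B'$ --- ``$R$ is red and therefore not isotopic to $B'$'' --- does not do the job. Since $R\subset M_+$ and $B'\subset M_-$, the two disks are never isotopic regardless of color, and the coloring by itself says nothing about whether their boundary curves are parallel in $F$; indeed a compressing disk above $F$ with boundary parallel to $\partial B'$ would automatically be red, so the hypothesis is circular. The correct justification, already supplied in the proof of Corollary~\ref{cor:intersect_boundaryB_or_Gates}, is that $L$ is a non-split link, which forbids a curve in $F$ from bounding compressing disks on both sides. With that substituted, your deduction that a red disk above $F$ disjoint from $B'$ would lie in $\mathcal{R}$, contradicting Theorem~\ref{thm:mathcalR_is_empty}, goes through and matches the paper's reasoning.
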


\begin{proof}
Observe that $B$ and $B'$ give a pair of disjoint blue disks on opposite sides of $F$. Let $B''$ and $B'''$ be the frontier of a regular neighborhood of \(D''\) in $M_-$ and the frontier of a regular neighborhood of \(D^b\) in $M_+$ respectively. It is easy to see that $B''$ and $B'''$ give a pair of disjoint red disks on opposite sides of $F$. Thus, condition (1) in the definition of a critical surface is satisfied. To see that condition (2) is also satisfied, note that Theorem \ref{thm:mathcalR_is_empty} implies that any red disk above \(F\) intersects the blue disk below \(F\). Furthermore, by rotational symmetry, we can slightly alter the arguments presented above to show that any red disk below \(F\) intersects the blue disk above \(F\). Hence, we may conclude that the canonical bridge sphere for \(L\) is critical. 
\end{proof}

For each twist region of our link, there are various ways to appropriately select the number of half twists so that the resulting link has $k$ components where $k \in \mathbb{N},$ and $1 \leq k \leq b.$ 
(For example, we could accomplish this by assigning an odd number of half twists to exactly \(b-k\) of the twist regions along the top row, and assigning an even number of half twists to all the other twist regions.)
In particular, we obtain the following corollaries (the second of which is stronger):

\begin{cor}
There is an infinite family of nontrivial knots with critical bridge spheres.
\end{cor}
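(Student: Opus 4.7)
The plan is as follows. Fix any $b \geq 3$ and consider the 2-twisted square plat construction from Theorem~\ref{thm:mainmaintheorem}. Using the recipe described in the paragraph preceding this corollary with $k=1$, I would assign an odd number of half twists (of absolute value at least $2$) to exactly $b-1$ of the twist regions along the top row, and an even number of half twists (of absolute value at least $2$) to every other twist region. The resulting link $L$ is a 2-twisted $(2b-4,b)$-plat link with exactly one component; that is, $L$ is a knot, so Theorem~\ref{thm:mainmaintheorem} applies and guarantees that the canonical bridge sphere for $L$ is critical.

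Next I would verify nontriviality. Because the rows of twist regions twist with alternating signs and each twist region contains at least two half twists, the standard plat diagram of $L$ is a reduced alternating knot diagram with a strictly positive number of crossings. By the resolution of the Tait conjectures (the Kauffman--Murasugi--Thistlethwaite theorem), the crossing number of $L$ equals the number of crossings in any reduced alternating diagram of $L$, so this crossing number is positive and $L$ is not the unknot.

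To produce the infinite family, I would keep the parity pattern above fixed and vary the number of half twists in a single chosen twist region, adding $2$ each time. This preserves the 2-twisted condition, the alternating sign pattern between rows, and the parity pattern, so each resulting link remains a knot, and by Theorem~\ref{thm:mainmaintheorem} each such knot has a critical canonical bridge sphere. The crossing count of the corresponding reduced alternating diagrams strictly increases along the sequence, so by the same Tait--Kauffman--Murasugi--Thistlethwaite result the underlying knots have pairwise distinct crossing numbers and are therefore pairwise inequivalent.

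The main obstacle in this plan is ensuring that varying a single twist region yields pairwise distinct knots rather than merely distinct diagrams; this is precisely what is handled by invoking the invariance of crossing number for reduced alternating diagrams. Everything else is a direct application of Theorem~\ref{thm:mainmaintheorem} to the explicitly constructed family.
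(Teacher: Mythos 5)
Your proposal is correct and takes essentially the same approach the paper intends: the paper provides no explicit proof of this corollary, only the parenthetical recipe in the preceding paragraph for selecting twist parities to force $k=1$ components, so the corollary is treated as immediate. You fill in two details the paper leaves implicit --- nontriviality of the resulting knots (correctly handled via the Kauffman--Murasugi--Thistlethwaite crossing-number theorem applied to the reduced alternating plat diagram, which is the right tool here since the paper's appeal to Menasco's non-splitness result in the proof of Corollary~\ref{cor:intersect_boundaryB_or_Gates} does not by itself rule out the unknot) and pairwise distinctness of the infinitely many knots (again via crossing number, after observing that adding $2$ half twists preserves parity, alternation, the $2$-twisted condition, and hence keeps the link a knot). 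This makes your write-up a more complete version of the argument the paper gestures at, not a genuinely different route.
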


\begin{cor}
Given any bridge number \(b\geq 3\), and any integer \(k\) with \(1\leq k\leq b\), there exists an infinite family of \(k\)-component links in \(b\) bridge position with critical bridge spheres.
\end{cor}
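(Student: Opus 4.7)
The plan is to apply Theorem \ref{thm:mainmaintheorem} to carefully chosen 2-twisted square plat links whose twist numbers are tuned to control both the component count and the isotopy type. Since Theorem \ref{thm:mainmaintheorem} already establishes criticality of the canonical bridge sphere for every 2-twisted square plat link with alternating-sign twist rows, what remains is twofold: (i) show that for every $1\le k\le b$ one can choose twist numbers producing exactly $k$ link components, and (ii) produce infinitely many non-isotopic links satisfying these specifications.

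For (i), I will follow the recipe indicated in the paragraph preceding the corollary. Pick any $b-k$ twist regions in the top row of twist regions and assign each an odd number of half twists of absolute value at least $3$; assign every other twist region an even number of half twists of absolute value at least $2$ (and of course with signs alternating from row to row, as required by Theorem \ref{thm:mainmaintheorem}). Each twist region then induces on adjacent strands either a transposition (odd twist) or the identity (even twist), so the whole plat induces a permutation $\sigma$ of the $2b$ strand positions. Since the top and bottom bridges both pair $(2i-1,2i)$, the number of components of the resulting link equals the number of orbits on $\{1,\dots,2b\}$ of the group generated by the involutions $\tau=(1,2)(3,4)\cdots(2b-1,2b)$ and $\psi=\sigma^{-1}\tau\sigma$. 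A short direct computation shows that the choice $\sigma=(2,3)(4,5)\cdots(2(b-k),2(b-k)+1)$ (i.e., odd twists placed in the leftmost $b-k$ top-row regions) yields a single long orbit on $\{1,\dots,2(b-k)+2\}$ together with $k-1$ obvious two-element orbits $\{2j-1,2j\}$ for $j>b-k+1$, hence exactly $k$ orbits. The resulting link is still a 2-twisted square plat with alternating-sign rows, so Theorem \ref{thm:mainmaintheorem} gives that its canonical bridge sphere is critical.

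For (ii), within the parity and sign constraints fixed in (i), each individual twist number can be chosen independently among any integers of the prescribed parity and sign whose absolute value is at least $2$ (or $3$ where oddness is required). Sending the twist number in any single region to $\pm\infty$ along this sequence preserves all the hypotheses of Theorem \ref{thm:mainmaintheorem} and of part (i). Now, as noted in the proof of Corollary \ref{cor:intersect_boundaryB_or_Gates}, every such link is non-split and admits a reduced alternating diagram of the form in Figure \ref{fig:plat_link_general}. By the Kauffman--Murasugi--Thistlethwaite theorem, the number of crossings in a reduced alternating diagram is a link invariant; hence unbounded twist numbers force unbounded crossing number and therefore infinitely many distinct isotopy classes of $k$-component links.

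The principal obstacle I expect is the bookkeeping required to verify that the explicit choice in (i) really produces exactly $k$ components for every $b$ and every $1\le k\le b$. Small cases such as $b=3,4$ confirm the pattern, and it extends either by induction on $b-k$ (each additional leftmost odd twist region merges one more two-cycle into the growing orbit containing $1$) or by a closed-form analysis of the cycle structure of $\langle\tau,\sigma^{-1}\tau\sigma\rangle$ for the prescribed $\sigma$. Everything else in the argument is either a direct appeal to Theorem \ref{thm:mainmaintheorem} or a standard crossing-number invariant argument for non-split alternating links.
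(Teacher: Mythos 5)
Your proof follows the paper's own (very briefly sketched) approach: the paper simply remarks that one can assign odd twist numbers to $b-k$ top-row regions and even twist numbers elsewhere to obtain $k$ components, and leaves both the component-count verification and the infinitude claim implicit. You correctly flesh out the component count with the orbit computation (the top row has its $b-1$ twist regions between the pairs $(2j,2j+1)$, so your $\sigma$ and the resulting orbit structure giving one orbit on $\{1,\dots,2(b-k)+2\}$ plus $k-1$ doubletons is right), and your crossing-number argument via the Kauffman--Murasugi--Thistlethwaite theorem for the reduced alternating plat diagram is a valid and natural way to supply the infinitude that the paper merely asserts.
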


\bibliographystyle{plain}
\bibliography{main}
\end{document}